\documentclass[12pt,reqno]{article}

\usepackage[usenames]{color}

\usepackage{tikz}
\tikzset{
    state/.style={
           rectangle,
           rounded corners,
           draw=black, very thick,
           minimum height=1.25em,
           inner sep=2pt,
           text centered,
           },
}

\tikzset{
    state2/.style={
           rectangle, dotted,
           draw=black,
           minimum height=1.15em,
           inner sep=2pt,
           text centered,
           fill=white
           },
}

\usepackage[colorlinks=true,
linkcolor=webgreen,
filecolor=webbrown,
citecolor=webgreen]{hyperref}

\definecolor{webgreen}{rgb}{0,.5,0}
\definecolor{webbrown}{rgb}{.6,0,0}

\usepackage{fullpage}
\usepackage{float}

\usepackage{graphicx,amsmath,amssymb}
\usepackage{amsthm}
\usepackage{amsfonts}
\usepackage{latexsym}
\usepackage{epsf}

\setlength{\textwidth}{6.5in}
\setlength{\oddsidemargin}{.1in}
\setlength{\evensidemargin}{.1in}
\setlength{\topmargin}{-.1in}
\setlength{\textheight}{8.4in}

\newcommand{\seqnum}[1]{\href{http://oeis.org/#1}{\underline{#1}}}

\DeclareMathOperator{\pr}{\mathfrak{pr}}
\DeclareMathOperator{\popop}{\mathfrak{popop}}

\begin{document}


\theoremstyle{plain}
\newtheorem{theorem}{Theorem}
\newtheorem{corollary}[theorem]{Corollary}
\newtheorem{lemma}[theorem]{Lemma}
\newtheorem{proposition}[theorem]{Proposition}
\newtheorem{conjecture}[theorem]{Conjecture}
\newtheorem{claim}[theorem]{Claim}
\newtheorem*{theorem*}{Theorem}

\theoremstyle{definition}
\newtheorem{observation}[theorem]{Observation}
\newtheorem{definition}[theorem]{Definition}
\newtheorem{question}[theorem]{Question}
\newtheorem{open_question}[theorem]{Open Question}
\newtheorem{example}[theorem]{Example}

\theoremstyle{remark}
\newtheorem{remark}[theorem]{Remark}

\begin{center}
\vskip 1cm{\LARGE\bf Coprime and Prime Labelings of Graphs}
\vskip 1cm
Adam H. Berliner, Department of Mathematics, Statistics, and Computer Science\\
St.~Olaf College, Northfield, MN 55057, USA\\
\href{mailto:berliner@stolaf.edu}{\tt berliner@stolaf.edu} \\
\ \\
Nathaniel Dean, Department of Mathematics\\
Texas State University, San Marcos, TX  78666, USA\\
\href{mailto:nd17@txstate.edu}{\tt nd17@txstate.edu}\\
\ \\
Jonelle Hook, Department of Mathematics and Computer Science\\
Mount St.~Mary's University, Emmitsburg, MD 21727, USA\\
\href{mailto:jhook@msmary.edu}{\tt jhook@msmary.edu}\\
\ \\
Alison Marr, Department of Mathematics and Computer Science\\
Southwestern University, Georgetown, TX 78626, USA\\
\href{mailto:marra@southwestern.edu}{\tt marra@southwestern.edu}\\
\ \\
Aba Mbirika, Department of Mathematics\\
University of Wisconsin-Eau Claire, Eau Claire, WI  54702, USA\\
\href{mailto:mbirika@uwec.edu}{\tt mbirika@uwec.edu}\\
\ \\
Cayla D. McBee, Department of Mathematics and Computer Science\\
Providence College, Providence, RI 02918, USA\\
\href{mailto:cmcbee@providence.edu}{\tt cmcbee@providence.edu}
\end{center}

\vskip .2 in

\begin{abstract}   
A coprime labeling of a simple graph of order $n$ is a labeling in which adjacent vertices are given relatively prime labels, and a graph is prime if the labels used can be taken to be the first $n$ positive integers.  In this paper, we consider when ladder graphs are prime and when the corresponding labeling may be done in a cyclic manner around the vertices of the ladder.  Furthermore, we discuss coprime labelings for complete bipartite graphs. 
\end{abstract}

\section{Introduction}\label{sec:Intro}

Let $G=(V,E)$ be a simple graph with vertex set $V$ and edge set $E$, where $n = |V|$ is the number of vertices of $G$.  A \textit{coprime labeling} of $G$ is a labeling of the vertices of $G$ with distinct integers from the set $\{1, 2, \ldots, k\}$, for some $k \geq n$, in such a way that the labels of any two adjacent vertices are relatively prime.  We then define $\pr(G)$ to be the minimum value of $k$ for which $G$ has a coprime labeling.  The corresponding labeling of $G$ is called a \textit{minimal coprime labeling} of $G$.  

If $\pr(G)=n$, then a corresponding minimal coprime labeling of $G$ is called a \textit{prime labeling} of $G$ and we call $G$ \textit{prime}.  Though these definitions are more common, some of the literature uses the term \textit{coprime} to mean what we refer to as \textit{prime} in this paper (cf.~\cite{erdos-sarkozy96}).   In our setting it does not make sense to refer to a graph as {\em coprime}, since all graphs have a coprime labeling (for example, use the first $n$ prime integers as the labels).

Much work has been done on various types of labeling problems, including coprime and prime graphs (see \cite{Gallian} for a detailed survey).  For nearly 35 years, Entringer's conjecture that all trees are prime has remained unsolved.  Some progress towards the result has been made.  In 1994, Fu and Huang proved trees with 15 or fewer vertices are prime \cite{Fu-Huang94}.  Pikhurko improved the result for trees of up to 50 vertices \cite{Pikhurko07} in 2007.  In 2011, Haxell, Pikhurko, and Taraz  \cite{HPT11} prove Entringer's conjecture for trees of sufficiently large order.  Specifically, it is known that paths, stars, caterpillars, complete binary trees, and spiders are prime.

Many other classes of graphs have been studied as well, several of which are constructed from trees.  If we let $P_n$ denote the path on $n$ vertices, then the Cartesian product $P_n\times P_m$, where $m\leq n$, is called a {\it grid graph}.  Some results about prime labelings of grid graphs can be found in \cite{SPS06, Kanetkar}. If $m=2$, then the graph is called a {\it ladder}.  Several results are known about ladders. For example, if $n$ and $k$ are prime, then $P_{n} \times P_{2}$, $P_{n+1} \times P_{2}$, $P_{n+k} \times P_{2}$, $P_{3n} \times P_{2}$, and $P_{n+2} \times P_{2}$ are prime \cite{VSN, SPS06, SPS07}. Ladders $P_{n} \times P_{2}$, $P_{n+1} \times P_{2}$, and $P_{n+2} \times P_{2}$ have also been shown to be prime when $2n+1$ is prime \cite{VSN,SPS07,Varkey}. In \cite{Varkey}, it is conjectured that all ladders are prime. While we cite the papers \cite{VSN,Varkey} here, we note that both of them contain some errors and incomplete proofs. 

In Section~\ref{sec:ladders}, we further consider prime labelings for ladders.  Moreover, we consider instances when a prime labeling exists where the labels occur in numerical order around the vertices of the ladder.

In Section~\ref{sec:bipartite}, we consider complete bipartite graphs $K_{m,n}$.  In the case $m=n$, it is clear that $K_{n,n}$ is not prime when $n>2$.  Thus, we focus on minimal coprime labelings.  In the more general case of $m<n$, for each $m$ we give a sufficiently large lower bound value of $n$ for which $K_{m,n}$ is prime.  Specifically, we give all values of $n$ for which $K_{m,n}$ is prime for $m\leq 13$.

In Section~\ref{sec:future_work}, we conclude with some possible directions for future work.

\section{Ladders}\label{sec:ladders}
In this section, we give labelings of ladders that are mainly constructed in a cyclic manner. As mentioned above, several classes of ladders have been shown to be prime. We reproduce some of the results of \cite{VSN} and \cite{SPS07}, but our constructions are arguably more elegant and complete than those given. 

\begin{theorem}\label{thm:Deans_idea}
If $n+1$ is prime, then $P_n \times P_2$ has a prime labeling.  Moreover, this prime labeling can be realized with top row labels from left to right, $1, 2, \ldots, n$, and bottom row labels from left to right, $n+2, n+3, \ldots, 2n, n+1$.
\end{theorem}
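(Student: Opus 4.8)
The plan is simply to exhibit the stated labeling and verify that every edge of the ladder joins two coprime labels; since the $2n$ labels used are exactly the integers $1, 2, \ldots, 2n$, this is automatically a prime labeling. Write $u_1, \ldots, u_n$ for the top row and $w_1, \ldots, w_n$ for the bottom row, so that the edges of $P_n \times P_2$ are the \emph{rails} $u_iu_{i+1}$ and $w_iw_{i+1}$ for $1 \le i \le n-1$, together with the \emph{rungs} $u_iw_i$ for $1 \le i \le n$. Under the proposed labeling $u_i$ receives label $i$, while $w_i$ receives label $n+1+i$ for $1 \le i \le n-1$ and $w_n$ receives label $n+1$. I would first note that these $2n$ labels are a permutation of $\{1, \ldots, 2n\}$, so that the only thing left to check is coprimality along edges.

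Next I would dispose of the rails. Along the top, $u_iu_{i+1}$ carries the consecutive labels $i$ and $i+1$, which are coprime. Along the bottom, for $1 \le i \le n-2$ the edge $w_iw_{i+1}$ likewise carries consecutive labels $n+1+i$ and $n+2+i$; the sole exceptional bottom rail is $w_{n-1}w_n$, carrying labels $2n$ and $n+1$. This is the one place the cyclic shift breaks the pattern of consecutive integers, so I expect it to be the main (minor) subtlety. Here I would use $2n \equiv -2 \pmod{n+1}$ to get $\gcd(2n, n+1) = \gcd(2, n+1)$, which equals $1$ because $n+1$ is an odd prime; the case $n = 1$ is vacuous, as there are then no rails.

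The heart of the argument is the rungs, and this is exactly where the hypothesis that $n+1$ is prime earns its keep. For $1 \le i \le n-1$ the rung $u_iw_i$ carries labels $i$ and $n+1+i$, and since $\gcd(i, n+1+i) = \gcd(i, n+1)$ with $1 \le i \le n-1 < n+1$, the label $i$ is a positive integer strictly less than the prime $n+1$ and hence coprime to it. The final rung $u_nw_n$ carries the consecutive labels $n$ and $n+1$, which are coprime. Having checked all three edge types, every adjacency is between coprime labels, and since the labels form a permutation of $\{1, 2, \ldots, 2n\}$, the labeling is prime, which proves the theorem. The construction is the clever part; the verification is entirely routine once one observes that the cyclic placement of $n+1$ reduces each rung gcd to $\gcd(i, n+1)$, a quantity forced to $1$ precisely by the primality of $n+1$.
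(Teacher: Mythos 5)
Your proof is correct and follows essentially the same route as the paper: verify the rails via consecutive labels, reduce each rung to $\gcd(i,n+1)$, and handle the one exceptional bottom edge with labels $2n$ and $n+1$ separately. The only cosmetic difference is that you establish $\gcd(2n,n+1)=1$ via $2n\equiv -2\pmod{n+1}$ and the oddness of the prime $n+1$ (with $n=1$ vacuous), whereas the paper simply notes that the prime $n+1$ cannot divide $2n$ since $2n<2(n+1)$; both are fine.
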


\begin{proof}
Consider the graph $P_n \times P_2$ where $n+1$ is prime.  We claim that the following vertex labeling gives a prime labeling:
\begin{center}
\begin{tikzpicture}[xscale=1.5]
\draw (0,0) -- (1.5,0);
\draw (0,1) -- (1.5,1);
\draw (2.5,0) -- (4,0);
\draw (2.5,1) -- (4,1);
\foreach \x in {0,1,3,4}
		\draw (\x,0) -- (\x,1);
\node at (2,0) {$\cdots$};
\node at (2,1) {$\cdots$};
\foreach \x in {0,1,3,4}
		\shade[ball color=blue] (\x,0) circle (.5ex);
\foreach \x in {0,1,3,4}
		\shade[ball color=blue] (\x,1) circle (.5ex);
\node [above] at (0,1) {1};
\node [above] at (1,1) {2};
\node [above] at (3,1) {$n-1$};
\node [above] at (4,1) {$n$};
\node [below] at (0,0) {$n+2$};
\node [below] at (1,0) {$n+3$};
\node [below] at (3,0) {$2n$};
\node [below] at (4,0) {$n+1$};
\end{tikzpicture}
\end{center}
Since $\gcd(k,k+1) = 1$, it suffices to check only the vertex labels arising from the endpoints of the following $n$ particular edges:
\begin{itemize}
\item the horizontal edge connecting vertex labels $2n$ and $n+1$, and
\item the first $n-1$ vertical edges going from left to right.
\end{itemize}
Since $n+1$ is prime and $2n < 2(n+1)$, then $n+1$ cannot divide $2n$.  Hence, $\gcd(2n, n+1)=1$ as desired.  Observe that each of the  $n-1$ vertical edges under consideration have vertex labels $a$ and $(n+1)+a$ for $1 \leq a \leq n-1$.  It follows that $\gcd((n+1)+a,a) = \gcd(n+1,a) = 1$.  Thus, the graph $P_n \times P_2$ is prime whenever $n+1$ is prime.
\end{proof}

The remaining theorems involve consecutive cyclic prime labelings of ladders.  
Let $P_n \times P_2$ be the ladder with vertices $v_1, v_2, \dots, v_n$ and $u_1, u_2, \dots, u_n$, where $v_i$ is adjacent to $u_i$ for $1\leq i\leq n$, $v_i$ is adjacent to $v_{i+1}$ for $1\leq i\leq n-1$, and $u_i$ is adjacent to $u_{i+1}$ for $1\leq i\leq n-1$. When drawing a ladder graph, we may assume without loss of generality that $v_1$ denotes the top left vertex of the graph.  

\begin{definition}\label{def:ccpl}
A \textit{consecutive cyclic prime labeling} of a ladder $P_n \times P_2$ is a prime labeling in which the labels on the vertices wrap around the ladder in a consecutive way. In particular, if the label 1 is placed on vertex $v_i$, then 2 will be placed on $v_{i+1}$, $n-i+1$ will be placed on $v_n$, $n-i+2$ on $u_n$, $2n-i+1$ on $u_1$, $2n-i+2$ on $v_1$, and $2n$ will be placed on vertex $v_{i-1}$. A similar definition holds if 1 is placed on $u_i$.
\end{definition}

The reverse direction of the following theorem is stated and proved in \cite{VSN, Varkey}, but we include our own proof here for completeness. 

\begin{theorem}\label{iff_theorem}
$P_n \times P_2$ has a consecutive cyclic prime labeling with the value $1$ assigned to vertex $v_1$ if and only if $2n + 1$ is prime.
\end{theorem}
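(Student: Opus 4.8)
The plan is to exploit the fact that fixing the value $1$ at $v_1$, together with the definition of a consecutive cyclic labeling, determines the entire labeling uniquely; so the biconditional reduces to deciding exactly when that one explicit labeling is prime. First I would write down the forced labels: reading the outer $2n$-cycle starting at $v_1$ and moving along the top row, the top vertices receive $v_j = j$ for $1 \le j \le n$, while the bottom vertices receive $u_j = 2n + 1 - j$ for $1 \le j \le n$ (so that $u_n = n+1$ and $u_1 = 2n$, matching Definition~\ref{def:ccpl}).

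Next I would dispatch the ``easy'' edges. Every top-row edge joins $v_j$ to $v_{j+1}$, i.e.\ the consecutive integers $j$ and $j+1$, and every bottom-row edge joins $u_j$ to $u_{j+1}$, i.e.\ the consecutive integers $2n+1-j$ and $2n-j$; since $\gcd(k,k+1)=1$, all of these pairs are automatically coprime. Hence the labeling is prime if and only if every rung edge $v_j \sim u_j$ is coprime, and the key reduction is the congruence $\gcd(j,\,2n+1-j) = \gcd(j,\,2n+1)$, which holds because $2n+1-j \equiv 2n+1 \pmod{j}$. Thus the labeling is a prime labeling if and only if $\gcd(j,\,2n+1) = 1$ for every $1 \le j \le n$.

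Finally I would convert this divisibility condition into the primality of $2n+1$. For the reverse implication, if $2n+1$ is prime then no index $j$ with $1 \le j \le n < 2n+1$ can share a factor with it, so every rung is coprime and the labeling is prime. For the forward implication I would argue contrapositively: if $2n+1$ is composite, factor $2n+1 = ab$ with $1 < a \le b$; then $a \le \sqrt{2n+1} < n+1$, so $a$ is an integer in $\{2, \dots, n\}$ with $\gcd(a,\,2n+1) = a > 1$, making the rung $v_a \sim u_a$ non-coprime, so no such prime labeling exists.

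I expect the only point requiring care to be the bound $a \le n$ in the composite case, that is, confirming that a proper divisor of $2n+1$ always lands within the index range $1 \le j \le n$ of the rungs; this follows from $2n+1 < (n+1)^2$, which holds for all $n \ge 1$. Everything else is a direct computation. The reverse direction is the one already appearing in \cite{VSN, Varkey}; the forward direction, showing that compositeness always forces an obstruction at a specific rung, is where the argument does its real work.
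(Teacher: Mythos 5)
Your proposal is correct and follows essentially the same route as the paper: the same forced labeling (top row $1,\dots,n$, bottom row $2n,\dots,n+1$), the same reduction to the rung edges via $\gcd(k,k+1)=1$, and the same identity $\gcd(a,2n+1-a)=\gcd(a,2n+1)$. The one place you go beyond the paper is the explicit bound $a \le \sqrt{2n+1} < n+1$ guaranteeing that a proper divisor of a composite $2n+1$ actually occurs as a rung index; the paper's forward direction simply picks a divisor $p>1$ and draws it in the top row without verifying $p\le n$, so your extra care closes a small gap rather than changing the argument.
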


\begin{proof}
We prove the forward implication by contradiction. Let $p>1$ be a divisor of $2n+1$. The following consecutive cyclic labeling of $P_n \times P_2$ with the value of 1 assigned to the top left vertex of the graph is not a prime labeling. The pair of vertices labeled $p$ and $2n-(p-1)=2n+1 -p$ are not relatively prime since $p\ |\ 2n+1$.

\begin{center}
\begin{tikzpicture}[xscale=1.5]
\draw (0,0) -- (2.5,0);
\draw (0,1) -- (2.5,1);
\draw (3.5,0) -- (4,0);
\draw (3.5,1) -- (4,1);
\draw (4,0) -- (4.5,0);
\draw (4,1) -- (4.5,1);
\foreach \x in {0,1,2,4}
		\draw (\x,0) -- (\x,1);
\node at (3,0) {$\cdots$};
\node at (3,1) {$\cdots$};
\foreach \x in {0,1,2,4}
		\shade[ball color=blue] (\x,0) circle (.5ex);
\foreach \x in {0,1,2,4}
		\shade[ball color=blue] (\x,1) circle (.5ex);
\node at (5,0) {$\cdots$};
\node at (5,1) {$\cdots$};
\node [above] at (0,1) {1};
\node [above] at (1,1) {2};
\node [above] at (2,1) {3};
\node [above] at (4,1) {$p$};
\node [below] at (0,0) {$2n$};
\node [below] at (1,0) {$2n-1$};
\node [below] at (2,0) {$2n-2$};
\node [below] at (4,0) {$2n-(p-1)$};
\end{tikzpicture}
\end{center}

Conversely, consider the graph $P_n \times P_2$ where $2n+1$ is prime.  We claim that the following vertex labeling gives a consecutive cyclic prime labeling:
\begin{center}
\begin{tikzpicture}[xscale=1.5]
\draw (0,0) -- (1.5,0);
\draw (0,1) -- (1.5,1);
\draw (2.5,0) -- (4,0);
\draw (2.5,1) -- (4,1);
\foreach \x in {0,1,3,4}
		\draw (\x,0) -- (\x,1);
\node at (2,0) {$\cdots$};
\node at (2,1) {$\cdots$};
\foreach \x in {0,1,3,4}
		\shade[ball color=blue] (\x,0) circle (.5ex);
\foreach \x in {0,1,3,4}
		\shade[ball color=blue] (\x,1) circle (.5ex);
\node [above] at (0,1) {1};
\node [above] at (1,1) {2};
\node [above] at (3,1) {$n-1$};
\node [above] at (4,1) {$n$};
\node [below] at (0,0) {$2n$};
\node [below] at (1,0) {$2n-1$};
\node [below] at (3,0) {$n+2$};
\node [below] at (4,0) {$n+1$};
\end{tikzpicture}
\end{center}
Since $\gcd(k,k+1) = 1$, it suffices to check only the vertex labels arising from the endpoints of the first $n-1$ vertical edges going from left to right.  Observe that each of the  $n-1$ vertical edges under consideration have vertex labels $a$ and $(2n+1)-a$ for $1 \leq a \leq n-1$.  We conclude that
$$\gcd(a,(2n+1)-a) = \gcd(a,2n+1) = 1.$$
Thus, the graph $P_n \times P_2$ has a consecutive cyclic prime labeling whenever $2n+1$ is prime. 
\end{proof}

When drawing a ladder, $n$ columns are formed consisting of a vertex from the first path, a vertex from the second path, and the edge between them. When we place labels on the vertices, we create $n$ column sums which are just the sum of the label on vertex $u_i$ and $v_i$ for $1\leq i\leq n$.  When constructing a consecutive cyclic labeling, without loss of generality we place a value of 1 somewhere in the top row of the ladder and increase each next vertex label by one in a clockwise direction. Thus, there will be a value $k$ directly below the 1 depending on where the 1 is placed in the top row. This creates column sums of  $k+1$ for columns to the right of 1 (and including the column with a 1) and column sums of $2n+k+1$ for columns to the left of the 1. 

\begin{theorem}\label{pcolsumsmod2n_theorem}
For every consecutive cyclic labeling of $P_n\times P_2$, the column sums are congruent to $k+1$ modulo $2n$, where $k$ is the label on the vertex directly below the vertex with label 1. If $k+1$ is not prime, then the consecutive cyclic labeling is not a prime labeling.
\end{theorem}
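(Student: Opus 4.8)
The plan is to adopt the normalization from the paragraph preceding the theorem (legitimate by the top--bottom symmetry of the ladder): the label $1$ sits on some top vertex $v_i$, and the labels increase by $1$ as one moves clockwise through $v_i, v_{i+1}, \ldots, v_n, u_n, u_{n-1}, \ldots, u_1, v_1, \ldots, v_{i-1}$. Write $k$ for the label on $u_i$, the vertex directly below the $1$; the goal is then to understand the column sums $v_j + u_j$.

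First I would establish the congruence by tracking how the column sum changes from one column to the next. Moving one step to the right increases the top label by $1$, while the bottom row is read off in the \emph{opposite} (leftward) direction by the clockwise rule, so the bottom label decreases by $1$; hence $v_j + u_j$ is unchanged from column to column, with at most one exception, namely the column boundary (present exactly when $i>1$) across which the top label drops from $2n$ to $1$ and the sum therefore decreases by exactly $2n$. Consequently the columns $i, \ldots, n$ all have sum $k+1$ and the columns $1, \ldots, i-1$ all have sum $2n+k+1$, and in either case $v_j + u_j \equiv k+1 \pmod{2n}$. A quick count, comparing the total $\sum_{j}(v_j + u_j)$ with $1 + 2 + \cdots + 2n = n(2n+1)$, gives $k+1 = 2r+1$, where $r$ is the number of columns of sum $k+1$; in particular $k$ is even, a fact I will use below.

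For the second statement, suppose $k+1$ is not prime. Being an odd composite, $k+1$ has a prime factor $p$ with $p \le (k+1)/2$, so that $p < k+1$ and $p \le k \le 2n$; thus $p$ is one of the labels actually used. I would then locate the column containing the label $p$. Its partner label is $s - p$, where $s \in \{k+1,\, 2n+k+1\}$ is that column's sum by the first part. Since every label is at most $2n$ and $2n+k+1-p > 2n$ (because $p < k+1$), the column cannot have sum $2n+k+1$; hence $s = k+1$ and the partner equals $k+1-p$. Now $p \mid k+1$ gives $p \mid (k+1-p)$, and $k+1-p \ne p$ because $k+1$ is odd, so these two adjacent vertices carry labels $p$ and $k+1-p$ with $\gcd(p, k+1-p) = p > 1$. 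This contradicts the prime-labeling condition, so no such labeling is prime.

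The first statement is essentially a formalization of the discussion preceding the theorem, so the genuine content lies in the second. The one step needing care is the claim that the label $p$ must be paired with $k+1-p$ rather than with a label in a ``wide'' column of sum $2n+k+1$; the size bound $2n+k+1-p > 2n$ makes this automatic, and the parity observation that $k+1$ is odd rules out the only degenerate possibility, $k+1 = 2p$, in which $p$ would be forced to pair with itself.
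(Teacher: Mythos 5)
Your proposal is correct and follows essentially the same route as the paper: establish that every column sum is $k+1$ or $2n+k+1$, note $k+1$ is an odd composite with a prime factor $p\le(k+1)/2$, and exhibit the vertically adjacent pair $p$ and $k+1-p$ with $\gcd(p,k+1-p)=p>1$. Your additional justifications (the counting argument that $k$ is even, and the bound $2n+k+1-p>2n$ forcing $p$ into a column of sum $k+1$) only make explicit steps the paper leaves implicit.
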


\begin{proof}
Consider a ladder graph with the following consecutive cyclic labeling.

\begin{center}
\begin{tikzpicture}[xscale=1.2]
\draw (0.5,0) -- (3.5,0);
\draw (0.5,1) -- (3.5,1);
\node at (0,0) {$\cdots$};
\node at (0,1) {$\cdots$};
\node at (4,0) {$\cdots$};
\node at (4,1) {$\cdots$};
\foreach \x in {1,2,3}
		\draw (\x,0) -- (\x,1);
\foreach \x in {1,2,3}
		\shade[ball color=blue] (\x,0) circle (.5ex);
\foreach \x in {1,2,3}
		\shade[ball color=blue] (\x,1) circle (.5ex);
\node [above] at (1,1) {$2n$};
\node [above] at (2,1) {1};
\node [above] at (3,1) {2};
\node [below] at (1,0) {$k+1$};
\node [below] at (2,0) {$k$};
\node [below] at (3,0) {$k-1$};
\end{tikzpicture}
\end{center}

First, recall that each column sum in a consecutive cyclic labeling will be either $k+1$ or $2n+k+1$.
If $k+1$ is prime, then all the column sums are congruent to a prime modulo $2n$. So $k+1$ must be composite (which is an odd composite since $k$ must be even). Then $q\ |\ k+1$ for some prime $q$. Note that $k+1=q\cdot s > 2q$ which implies that $q < \dfrac{k+1}{2}$. In the above consecutive cyclic labeling, the labels $q$ and $k+1-q$ are (vertically) adjacent. However, gcd$(q,\ k+1-q)=q$ and so the consecutive cyclic labeling is not a prime labeling.
\end{proof}

The converse of Theorem \ref{pcolsumsmod2n_theorem} does not hold. If $k+1$ is prime, then it does not guarantee there exists a consecutive cyclic prime labeling, as the following example illustrates. 
\begin{example}
Below is a consecutive cyclic labeling of $P_5\times P_2$ where $k+1$ equals $5$, but the labeling is not prime.

\begin{center}
\begin{tikzpicture}[xscale=1.2]
\draw (0,0) -- (4,0);
\draw (0,1) -- (4,1);
\foreach \x in {0,3,4}
		\draw (\x,0) -- (\x,1);
\draw[dashed] (1,0)--(1,1);
\draw[dashed] (2,0)--(2,1);
\foreach \x in {0,1,2,3,4}
		\shade[ball color=blue] (\x,0) circle (.5ex);
\foreach \x in {0,1,2,3,4}
		\shade[ball color=blue] (\x,1) circle (.5ex);
\node [above] at (0,1) {8};
\node [above] at (1,1) {9};
\node [above] at (2,1) {10};
\node [above] at (3,1) {1};
\node [above] at (4,1) {2};
\node [below] at (0,0) {7};
\node [below] at (1,0) {6};
\node [below] at (2,0) {5};
\node [below] at (3,0) {4};
\node [below] at (4,0) {3};
\end{tikzpicture}
\end{center}
\end{example}

\begin{lemma}\label{primecolsums} Consider a consecutive cyclic labeling of $P_n\times P_2$ and let $k$ be the label on the vertex directly below the vertex with label 1. If the column sums $k+1$ and $2n+k+1$ are prime, then the labeling is a consecutive cyclic prime labeling of $P_n \times P_2$.
\end{lemma}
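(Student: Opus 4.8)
The plan is to verify directly that, under the stated hypothesis, every edge of $P_n \times P_2$ joins two vertices whose labels are relatively prime. First I would partition the edges into two types. In a consecutive cyclic labeling the labels $1, 2, \ldots, 2n$ are placed consecutively around the Hamiltonian cycle formed by both rows together with the leftmost and rightmost rungs; consequently every horizontal edge, as well as the two end rungs $v_1u_1$ and $v_nu_n$, joins labels that differ by $1$, with the single wrap-around edge joining the labels $2n$ and $1$. Since $\gcd(m,m+1)=1$ and $\gcd(2n,1)=1$, all of these edges are coprime automatically, and the only edges left to check are the interior rungs $v_ju_j$ with $2 \le j \le n-1$.

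For the interior rungs I would invoke the structure established in the discussion preceding Theorem~\ref{pcolsumsmod2n_theorem}: the column sum of any column is either $k+1$ or $2n+k+1$. Writing $S$ for the column sum at a given interior rung and $a, S-a$ for its two labels, the identity $\gcd(a,S-a)=\gcd(a,S)$ reduces the whole problem to showing that $S \nmid a$. Because $S$ is prime by hypothesis (in both of the two possible cases), it suffices to show that $a$ is a positive label strictly smaller than $S$, which forces $\gcd(a,S)=1$ and hence makes the two labels relatively prime.

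The one containment requiring a short argument is exactly this bound $0 < a < S$, and I expect it to be the only genuine (though minor) obstacle, since it is what rules out a label being a nonzero multiple of the prime $S$. Both labels on a rung are positive integers drawn from $\{1,\ldots,2n\}$ and summing to $S$, so each is at most $S-1$; in the case $S = 2n+k+1$ one even has $S > 2n$ outright, so every label is automatically smaller than $S$. Either way $0 < a < S$, giving $\gcd(a,S)=1$. Combining the two edge types shows that every edge has coprime endpoints, so the labeling is a prime labeling; since it was assumed to be consecutive cyclic, it is a consecutive cyclic prime labeling of $P_n \times P_2$, as claimed.
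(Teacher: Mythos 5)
Your proof is correct and follows essentially the same approach as the paper: reduce to the rungs, use the fact that each rung's labels sum to one of the two prime column sums $S\in\{k+1,\,2n+k+1\}$, and conclude $\gcd(a,S-a)=\gcd(a,S)=1$ since $0<a<S$. The only cosmetic difference is that you dispose of the end rungs via the consecutive-labels observation while the paper folds them into the column-sum argument.
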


\begin{proof}
Consider the graph $P_n \times P_2$ with a consecutive cyclic labeling and prime column sums $k+1$ and $2n+k+1$.  To show this labeling is prime we must verify that the vertex labels arising from the endpoints of the vertical edges are relatively prime.  Letting $k+1=p$ results in vertical pairs of labels $(a, p-a)$ for $1 \leq a \leq \frac{1}{2}(p-1)$ and $(2n-b, p+b)$ for $0 \leq b \leq n-\frac{1}{2}(p-1)$.  

First we claim that gcd($a$, $p-a$)=1.  Suppose gcd($a$, $p-a$)=$d$ for $d$ a positive integer.  This implies $d\ |\ (p-a+a)$ and so $d\ |\ p$.  Therefore, $d=1$ or $p$, but, $d \neq p$ since $d \leq a < p$. Thus, gcd($a$, $p-a$)=1.        

Next we claim that gcd($2n-b$, $p+b$)=1.  Suppose gcd($2n-b$, $p+b$)=$d$ for $d$ a positive integer.  Then $d\ |\ ((2n-b) + (p+b))$ which is equivalent to $d\ |\ (2n+p)$.  Since we assumed $2n+p$ is prime, $d=1$ or $2n+p$.  However, $d \neq 2n+p$ since $d \leq 2n-b < 2n+p$.  Therefore, gcd($2n-b$, $p+b$)=1. 

Given the cyclic labeling of the graph, all horizontal edges connect labels that are relatively prime.  Thus we can conclude that if the column sums $k+1$ and $2n+k+1$ are prime, then the labeling is a consecutive cyclic prime labeling.
\end{proof}

The converse of Lemma~\ref{primecolsums} is not true, as is shown in the following example. 

\begin{example}
The labeling below is a consecutive cyclic prime labeling of $P_4\times P_2$ with column sums 7 and 15.

\begin{center}
\begin{tikzpicture}[xscale=1.2]
\draw (0,0) -- (3,0);
\draw (0,1) -- (3,1);
\foreach \x in {0,1,2,3}
		\draw (\x,0) -- (\x,1);
\foreach \x in {0,1,2,3}
		\shade[ball color=blue] (\x,0) circle (.5ex);
\foreach \x in {0,1,2,3}
		\shade[ball color=blue] (\x,1) circle (.5ex);
\node [above] at (0,1) {8};
\node [above] at (1,1) {1};
\node [above] at (2,1) {2};
\node [above] at (3,1) {3};
\node [below] at (0,0) {7};
\node [below] at (1,0) {6};
\node [below] at (2,0) {5};
\node [below] at (3,0) {4};
\end{tikzpicture}
\end{center}
\end{example}

\begin{theorem}\label{big_theorem}
If $2n+p$ is prime where $p$ is a prime less than $2n+1$, then $P_n \times P_2$ has a consecutive cyclic prime labeling.  Moreover, this labeling can be realized by assigning $1$ to the vertex in the location $\frac{1}{2}(p-1)-1$ places from the top right vertex.
\end{theorem}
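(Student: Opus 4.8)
The plan is to choose the placement of the label $1$ so that the two column sums produced by the consecutive cyclic labeling are exactly $p$ and $2n+p$, and then invoke Lemma~\ref{primecolsums}. Recall from the discussion preceding Theorem~\ref{pcolsumsmod2n_theorem} that in any consecutive cyclic labeling the column sums take only the two values $k+1$ (in the column containing the $1$ and every column to its right) and $2n+k+1$ (in every column to its left), where $k$ is the label sitting directly below the $1$. Thus it suffices to arrange $k=p-1$: then the two column sums become $k+1=p$ and $2n+k+1=2n+p$, both of which are prime by hypothesis, and Lemma~\ref{primecolsums} immediately yields that the labeling is a consecutive cyclic prime labeling.

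First I would pin down the placement. Suppose $1$ is assigned to $v_i$, so that by Definition~\ref{def:ccpl} the labels increase clockwise: $v_i,\dots,v_n$ carry $1,\dots,n-i+1$, then $u_n,\dots,u_1$ carry $n-i+2,\dots,2n-i+1$, and $v_1,\dots,v_{i-1}$ carry $2n-i+2,\dots,2n$. A direct count then gives the label directly below the $1$ as $u_i=2n-2i+2$. Setting $2n-2i+2=p-1$ forces $i=n-\tfrac{1}{2}(p-1)+1$, which is precisely the vertex located $\tfrac{1}{2}(p-1)-1$ places to the left of the top right vertex $v_n$; this matches the location prescribed in the statement. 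I would also confirm that this placement is legitimate: since $2n+p$ is an odd prime, $p$ must be odd, so $k=p-1$ is even (consistent with the parity noted in Theorem~\ref{pcolsumsmod2n_theorem}); moreover $p\geq 3$ gives $\tfrac{1}{2}(p-1)-1\geq 0$, while the hypothesis $p<2n+1$ gives $\tfrac{1}{2}(p-1)\leq n-1$ and hence $1\leq i\leq n$.

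With $k=p-1$ in hand, a short recomputation from the explicit labels above confirms that every column to the right of (and including) the $1$ sums to $p$ and every column to the left sums to $2n+p$, so both column sums are prime, and Lemma~\ref{primecolsums} finishes the argument. The only genuine work here is the indexing bookkeeping of the middle step — verifying that proceeding clockwise from $v_i$ places $2n-2i+2$ directly below the $1$, and translating the resulting index $i$ into the ``$\tfrac{1}{2}(p-1)-1$ places from the top right'' description. Everything substantive has already been absorbed into Lemma~\ref{primecolsums}, so I do not anticipate any real obstacle beyond keeping the offsets straight.
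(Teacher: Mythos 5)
Your proposal is correct and follows essentially the same route as the paper: place the label $1$ so that the entry beneath it is $p-1$, observe that the two column sums are then $p$ and $2n+p$, and invoke Lemma~\ref{primecolsums}. The paper exhibits the labeling pictorially with explicit corner values rather than via your index computation $u_i = 2n-2i+2$, but the substance is identical.
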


\begin{proof}
Let $2n+p$ be a prime where $p$ is prime less than $2n+1$.  We claim that the following gives a consecutive cyclic labeling of $P_n \times P_2$:
\begin{center}
\begin{tikzpicture}[xscale=1.2]
\draw (0,0) -- (1.5,0);
\draw (0,1) -- (1.5,1);
\draw (2.5,0) -- (6.5,0);
\draw (2.5,1) -- (6.5,1);
\draw (7.5,0) -- (9,0);
\draw (7.5,1) -- (9,1);
\foreach \x in {0,1,3,4,5,6,8,9}
		\draw (\x,0) -- (\x,1);
\node at (2,0) {$\cdots$};
\node at (2,1) {$\cdots$};
\node at (7,0) {$\cdots$};
\node at (7,1) {$\cdots$};
\node [right] at (3.95,.5) {$e_L$};
\node [right] at (4.95,.5) {$e_R$};
\foreach \x in {0,1,3,4,5,6,8,9}
		\shade[ball color=blue] (\x,0) circle (.5ex);
\foreach \x in {0,1,3,4,5,6,8,9}
		\shade[ball color=blue] (\x,1) circle (.5ex);
\node [above] at (0,1) {$L_T$};
\node [above] at (1,1) {$L_T+1$};
\node [above] at (3,1) {$2n-1$};
\node [above] at (4,1) {$2n$};
\node [above] at (5,1) {$1$};
\node [above] at (6,1) {$2$};
\node [above] at (8,1) {$R_T-1$};
\node [above] at (9,1) {$R_T$};
\node [below] at (0,0) {$L_B$};
\node [below] at (1,0) {$L_B-1$};
\node [below] at (3,0) {$p+1$};
\node [below] at (4,0) {$p$};
\node [below] at (5,0) {$p-1$};
\node [below] at (6,0) {$p-2$};
\node [below] at (8,0) {$R_B+1$};
\node [below] at (9,0) {$R_B$};
\end{tikzpicture}
\end{center}
where the four corner values are
\begin{align*}
L_T &= \frac{1}{2}(p+1)+n & \hspace{.5in} R_T &= \frac{1}{2}(p-1)\\
L_B &= \frac{1}{2}(p-1)+n & \hspace{.5in} R_B &= \frac{1}{2}(p-1)+1.
\end{align*}
Since $\gcd(k,k+1)=1$, it suffices to check only the vertex labels arising from the endpoints of the first $n-1$ vertical edges going from left to right.  For the vertical edges right of (and including) $e_R$, the column sum is $p$.  For the vertical edges left of (and including) $e_L$, the column sum is $2n+p$.  Since both of these column sums are prime, Lemma~\ref{primecolsums} implies the ladder has a consecutive cyclic prime labeling. 
\end{proof}

It is of interest to know if there exists a prime number of the form $2n+p$ where $n$ is an integer and $p = 1$ or $p$ is a prime less than $2n+1$.  If such a prime exists, then by Theorem~\ref{iff_theorem} and Theorem~\ref{big_theorem} we may conclude that $\pr(P_n \times P_2) = 2n$ for all $n$ and also that every ladder has a consecutive cyclic prime labeling that we may easily construct.  

Unfortunately, determining whether such a prime exists is a difficult problem related to Polignac's conjecture.  Polignac's conjecture, first stated by Alphonse de Polignac in 1849~\cite{Polignac}, states that for any positive even integer $n$, there are infinitely many prime gaps of size $n$.  If $n=2$, the conjecture is equivalent to the twin prime conjecture.  In looking for cyclic prime labelings of ladder graphs, we are interested in finding pairs of primes that differ by the even number $2n$ where the smaller prime is less than $2n+1$.  Although Polignac's conjecture guarantees the existence of pairs of primes whose difference is $2n$, the result remains as yet to be proved.  Also, Polignac's conjecture does not address our additional constraint that the smaller prime be less than $2n+1$. In summary, we have the following observation:

\begin{observation} If every even integer $2n$ can be written in the form $2n=q-p$ where $q$ is a prime and $p$ is either 1 or a prime less than $2n+1$, then all ladder graphs are prime. \end{observation}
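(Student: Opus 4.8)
The plan is to show that this observation is an immediate consequence of Theorem~\ref{iff_theorem} and Theorem~\ref{big_theorem}, so I would prove it by fixing an arbitrary positive integer $n$ and exhibiting a prime labeling of the single ladder $P_n \times P_2$. The key move is to rewrite the hypothesis: the assumption $2n = q - p$ is equivalent to $q = 2n + p$, which is exactly the form of the prime required by the two cited theorems. Since $n$ is arbitrary and every ladder is of the form $P_n \times P_2$, establishing primality for each such $n$ yields the conclusion for all ladders.

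First I would split on the two possibilities for $p$ allowed by the hypothesis. If $p = 1$, then $q = 2n + 1$ is prime, and Theorem~\ref{iff_theorem} (the reverse implication) produces a consecutive cyclic prime labeling of $P_n \times P_2$ with $1$ placed on $v_1$. If instead $p$ is a prime less than $2n + 1$, then $q = 2n + p$ is a prime of exactly the form handled by Theorem~\ref{big_theorem}, which again produces a consecutive cyclic prime labeling. In either case the hypothesis supplies precisely the primality hypothesis those theorems demand.

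Next I would observe that in either case the labeling obtained is, by Definition~\ref{def:ccpl}, a prime labeling: it uses precisely the labels $1, 2, \ldots, 2n$ on the $2n$ vertices of the ladder, so $\pr(P_n \times P_2) = 2n$ and the ladder is prime. Because this argument applies to every $n$, and the hypothesis is assumed to hold for every even integer $2n$, all ladder graphs $P_n \times P_2$ are prime.

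There is no real mathematical obstacle here; the content of the statement lives entirely in the two theorems already established, and the only thing to verify is the bookkeeping that the algebraic rearrangement $q = 2n + p$ matches their hypotheses and that a consecutive cyclic prime labeling is in particular a prime labeling. The one point deserving a moment's care is the smallest cases (for instance $n = 1$, where the ladder degenerates to a single edge $K_2$), but these are covered by the $p = 1$ branch, since $2n + 1$ is then prime.
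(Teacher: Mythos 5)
Your proposal is correct and follows essentially the same route as the paper, which justifies the observation by exactly this case split: $p=1$ invokes Theorem~\ref{iff_theorem} and $p$ a prime less than $2n+1$ invokes Theorem~\ref{big_theorem}, each yielding a consecutive cyclic prime labeling on the labels $1,\ldots,2n$. Your added remarks about the degenerate case $n=1$ and the fact that a consecutive cyclic prime labeling is in particular a prime labeling are sound bookkeeping, not a departure from the paper's argument.
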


The labelings in the following example illustrate our results thus far.

\begin{example}
Consider $P_{10} \times P_2$.  Theorem~\ref{iff_theorem} does not apply because $2n+1 = 21$ is not prime.  Consequently, assigning 1 to the top left vertex does not yield a consecutive cyclic prime labeling.   

The only primes $p$ for which $2n+p$ is prime and $p<2n+1$ are $p = 3,11,17$. Thus, Theorem~\ref{big_theorem} holds and we have the following consecutive cyclic prime labelings:
\begin{center}
\begin{tikzpicture}[xscale=1.2]
\draw (0,0) -- (9,0); \draw (0,1) -- (9,1); \foreach \x in {0,...,9} \draw (\x,0) -- (\x,1);
\node [state2] at (9,.5) {\textcolor{red}{\text{\small$p=3$}}};
\node [above] at (0,1) {12}; \node [below] at (0,0) {11};
\node [above] at (1,1) {13}; \node [below] at (1,0) {10};
\node [above] at (2,1) {14}; \node [below] at (2,0) {9};
\node [above] at (3,1) {15}; \node [below] at (3,0) {8};
\node [above] at (4,1) {16}; \node [below] at (4,0) {7};
\node [above] at (5,1) {17}; \node [below] at (5,0) {6};
\node [above] at (6,1) {18}; \node [below] at (6,0) {5};
\node [above] at (7,1) {19}; \node [below] at (7,0) {4};
\node [above] at (8,1) {20}; \node [below] at (8,0) {3};
\node [above] at (9,1) {1}; \node [below] at (9,0) {2};
\foreach \x in {0,...,9} \shade[ball color=blue] (\x,0) circle (.5ex); \foreach \x in {0,...,9} \shade[ball color=blue] (\x,1) circle (.5ex);
\end{tikzpicture}
\end{center}

\begin{center}
\begin{tikzpicture}[xscale=1.2]
\draw (0,0) -- (9,0); \draw (0,1) -- (9,1); \foreach \x in {0,...,9} \draw (\x,0) -- (\x,1);
\node [state2] at (5,.5) {\textcolor{red}{\text{\small$p=11$}}};
\node [above] at (0,1) {16}; \node [below] at (0,0) {15};
\node [above] at (1,1) {17}; \node [below] at (1,0) {14};
\node [above] at (2,1) {18}; \node [below] at (2,0) {13};
\node [above] at (3,1) {19}; \node [below] at (3,0) {12};
\node [above] at (4,1) {20}; \node [below] at (4,0) {11};
\node [above] at (5,1) {1}; \node [below] at (5,0) {10};
\node [above] at (6,1) {2}; \node [below] at (6,0) {9};
\node [above] at (7,1) {3}; \node [below] at (7,0) {8};
\node [above] at (8,1) {4}; \node [below] at (8,0) {7};
\node [above] at (9,1) {5}; \node [below] at (9,0) {6};
\foreach \x in {0,...,9} \shade[ball color=blue] (\x,0) circle (.5ex); \foreach \x in {0,...,9} \shade[ball color=blue] (\x,1) circle (.5ex);
\end{tikzpicture}
\end{center}

\begin{center}
\begin{tikzpicture}[xscale=1.2]
\draw (0,0) -- (9,0); \draw (0,1) -- (9,1); \foreach \x in {0,...,9} \draw (\x,0) -- (\x,1);
\node [state2] at (2,.5) {\textcolor{red}{\text{\small$p=17$}}};
\node [above] at (0,1) {19}; \node [below] at (0,0) {18};
\node [above] at (1,1) {20}; \node [below] at (1,0) {17};
\node [above] at (2,1) {1}; \node [below] at (2,0) {16};
\node [above] at (3,1) {2}; \node [below] at (3,0) {15};
\node [above] at (4,1) {3}; \node [below] at (4,0) {14};
\node [above] at (5,1) {4}; \node [below] at (5,0) {13};
\node [above] at (6,1) {5}; \node [below] at (6,0) {12};
\node [above] at (7,1) {6}; \node [below] at (7,0) {11};
\node [above] at (8,1) {7}; \node [below] at (8,0) {10};
\node [above] at (9,1) {8}; \node [below] at (9,0) {9};
\foreach \x in {0,...,9} \shade[ball color=blue] (\x,0) circle (.5ex); \foreach \x in {0,...,9} \shade[ball color=blue] (\x,1) circle (.5ex);
\end{tikzpicture}.
\end{center}

In each graph, we highlight the value of $p$ where the location of the label 1 is determined by Theorem~\ref{big_theorem}.  For $p=19$, we observe that $2n+p = 39$ is not prime, yet the following labeling shows that assigning 1 to the prescribed vertex gives a successful labeling.

\begin{center}
\begin{tikzpicture}[xscale=1.2]
\draw (0,0) -- (9,0); \draw (0,1) -- (9,1); \foreach \x in {0,...,9} \draw (\x,0) -- (\x,1);
\node [state2] at (1,.5) {\textcolor{red}{\text{\small$p=19$}}};
\node [above] at (0,1) {20}; \node [below] at (0,0) {19};
\node [above] at (1,1) {1}; \node [below] at (1,0) {18};
\node [above] at (2,1) {2}; \node [below] at (2,0) {17};
\node [above] at (3,1) {3}; \node [below] at (3,0) {16};
\node [above] at (4,1) {4}; \node [below] at (4,0) {15};
\node [above] at (5,1) {5}; \node [below] at (5,0) {14};
\node [above] at (6,1) {6}; \node [below] at (6,0) {13};
\node [above] at (7,1) {7}; \node [below] at (7,0) {12};
\node [above] at (8,1) {8}; \node [below] at (8,0) {11};
\node [above] at (9,1) {9}; \node [below] at (9,0) {10};
\foreach \x in {0,...,9} \shade[ball color=blue] (\x,0) circle (.5ex); \foreach \x in {0,...,9} \shade[ball color=blue] (\x,1) circle (.5ex);
\end{tikzpicture}
\end{center}
An exhaustive check shows that assigning 1 to any other vertex in the top row fails to yield a consecutive cyclic prime labeling.
\end{example}

The previous example proves that the converse of Theorem~\ref{big_theorem} does not hold.  That is, there are primes $p$ for which $2n+p$ is not prime, yet the prescribed labeling is successful.

\section{Complete bipartite graphs}\label{sec:bipartite}

In this section, we look at prime labelings and coprime labelings of complete bipartite graphs.  We first examine minimal coprime labelings of $K_{n,n}$  for $n>2$.  Then, we consider complete bipartite graphs $K_{m,n}$ with $m<n$, which have prime labelings for sufficiently large $n$ (depending on the value of $m$).

\subsection{Minimal coprime labelings of \texorpdfstring{$K_{n,n}$}{Kn,n}}

It is straightforward to see that the complete bipartite graph $K_{n,n}$ has no prime labeling for $n>2$.  Hence, the best that one can do is to find the minimal value $\pr(K_{n,n}) > 2n$ such that $2n$ distinct labels chosen from the set $\{ 1, 2, \ldots, \pr(K_{n,n}) \}$ allows a coprime labeling of $K_{n,n}$.  We use the term \textit{minimal coprime labeling} to denote the latter assignment of labels on a graph $G$.

\begin{example}
The minimal coprime labelings of $K_{3,3}$ and $K_{4,4}$ below show that $\pr(K_{3,3})=7$ and $\pr(K_{4,4})=9$.

\begin{center}
\begin{tikzpicture}[xscale=1.5]
\foreach \x in {0,1,2}
\foreach \w in {0,1,2}
		\draw (\x,1) -- (\w,0);
\foreach \x in {0,1,2}
		\shade[ball color=blue] (\x,0) circle (.5ex);
\foreach \x in {0,1,2}
		\shade[ball color=blue] (\x,1) circle (.5ex);
\node [above] at (0,1) {1};
\node [above] at (1,1) {3};
\node [above] at (2,1) {5};
\node [below] at (0,0) {2};
\node [below] at (1,0) {4};
\node [below] at (2,0) {7};
\end{tikzpicture}
\hspace{.75in}
\begin{tikzpicture}[xscale=1.5]
\foreach \x in {0,1,2,3}
\foreach \w in {0,1,2,3}
		\draw (\x,1) -- (\w,0);
\foreach \x in {0,1,2,3}
		\shade[ball color=blue] (\x,0) circle (.5ex);
\foreach \x in {0,1,2,3}
		\shade[ball color=blue] (\x,1) circle (.5ex);
\node [above] at (0,1) {1};
\node [above] at (1,1) {3};
\node [above] at (2,1) {5};
\node [above] at (3,1) {9};
\node [below] at (0,0) {2};
\node [below] at (1,0) {4};
\node [below] at (2,0) {7};
\node [below] at (3,0) {8};
\end{tikzpicture}
\end{center}
\end{example}

Using an exhaustive computer check, we give the following  values for $\pr(K_{n,n})$.
\begin{center}
	\begin{tabular}{|c||ccccccccccccc|}
	\hline
	$n$ & 1 & 2 & 3 & 4 & 5 & 6 & 7 & 8 & 9 & 10 & 11 & 12 & 13 \\ \hline
	$\pr(K_{n,n})$ & 2 & 4 & 7 & 9 & 11 & 15 & 17 & 21 & 23 & 27 & 29 & 32 & 37 \\
	\hline
	\end{tabular}
\end{center}
This sequence~\seqnum{A213273} appears in the \textit{On-Line Encyclopedia of Integer Sequences} (OEIS)~\cite{Sloane}.  By further computer check, Alois Heinz extended the sequence to $n=23$, thus adding the following values for $\pr(K_{n,n})$:
\begin{center}
	\begin{tabular}{|c||cccccccccc|}
	\hline
	$n$ & 14 & 15 & 16 & 17 & 18 & 19 & 20 & 21 & 22 & 23 \\ \hline
	$\pr(K_{n,n})$ & 40 & 43 & 46 & 49 & 53 & 57 & 61 & 63 & 67 & 71\\
	\hline
	\end{tabular}
\end{center}

\begin{observation}
By analyzing the minimal coprime labelings constructed for $K_{n,n}$, the following facts are readily verified for $n\leq 13$:
\begin{itemize}
\item There exists a minimal coprime labeling with the labels 1 and 2 in separate partite sets.
\item All primes up to $\pr(K_{n,n})$ are used in a minimal coprime labeling.
\item There are never more than $\frac{n}{2}$ primes in a minimal coprime labeling of $K_{\frac{n}{2},\frac{n}{2}}$.
\item A small set, $\mathfrak{P}$, of \textit{carefully chosen} primes determine the minimal coprime labeling of $K_{n,n}$ by labeling one of the two partite sets of vertices with numbers from a set of products of powers of these primes (see Definition~\ref{popop}).
\end{itemize}
\end{observation}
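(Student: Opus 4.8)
The four assertions are empirical facts about the explicitly constructed optimal labelings for $n \le 13$, so the plan is to verify each one against those constructions, supported by a structural observation that makes the bookkeeping routine. The most useful such fact is that in \emph{any} coprime labeling of $K_{n,n}$ all even labels must lie in a single partite set: every vertex of one part is adjacent to every vertex of the other, and two even numbers are never coprime, so two evens can never be split across the two parts. This single observation organizes the whole picture. One part (call it the \emph{even part}) absorbs every even label that is used, and the complementary part consists entirely of odd labels.

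With this in hand, the first bullet is immediate: since $1$ is coprime to every integer it may be placed on any vertex, and in particular on a vertex of the odd part, while $2$ (being even) necessarily sits in the even part. Thus for each $n \le 13$ the explicit labeling can be arranged to have $1$ and $2$ in separate parts. For the second bullet I would argue that a prime $p$ is the ``cheapest'' label available, in the sense that it is coprime to every integer not divisible by $p$; consequently an optimal labeling, which by definition uses the smallest possible range $\{1,\dots,\pr(K_{n,n})\}$, has no incentive to skip a small prime, and one checks directly from the $n \le 13$ constructions that every prime up to $\pr(K_{n,n})$ does indeed appear.

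The heart of the matter is the fourth bullet, which explains \emph{why} economical labelings exist at all. Here the plan is to fix a small set $\mathfrak{P}$ of primes and to label the even part entirely with products of powers of the primes in $\mathfrak{P}$, that is integers of the form $\prod_{q \in \mathfrak{P}} q^{a_q}$, while the odd part receives only integers coprime to every prime in $\mathfrak{P}$. Cross-part coprimality is then automatic, since such a product shares no prime factor with a $\mathfrak{P}$-free integer. The existence half of the assertion therefore reduces to a counting problem: one checks that within $\{1,\dots,k\}$ there are at least $n$ products of powers of $\mathfrak{P}$ and at least $n$ integers coprime to all of $\mathfrak{P}$, for $k = \pr(K_{n,n})$. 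The third bullet can then be read off from the same constructions, using that the odd primes, being coprime to $\mathfrak{P}$ whenever $\mathfrak{P}$ is chosen to contain only the primes actually needed, are confined to the odd part and are thereby limited in number.

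The main obstacle is not coprimality itself, which becomes automatic once the $\mathfrak{P}$ versus non-$\mathfrak{P}$ split is in place, but the \emph{optimality} claim implicit in the word ``minimal'': confirming that the tabulated values $\pr(K_{n,n})$ cannot be lowered requires ruling out every competing assignment, which for $n \le 13$ is exactly what the exhaustive computer search cited in the paper supplies. The honest proof is thus a hybrid. The structural lemma reduces the existence half of each assertion to a clean counting and construction argument, while the minimality half rests on the finite enumeration. The genuine difficulty is choosing, for each $n$, the right prime set $\mathfrak{P}$ so that the two counts both reach $n$ at precisely the tabulated value of $k$, and this is where the case-by-case analysis underlying the observation really lives.
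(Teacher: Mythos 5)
The paper offers no proof of this Observation at all: it is presented as a set of empirical facts read off from the exhaustive computer search that produced the minimal coprime labelings for $n\le 13$, and the fourth bullet is essentially the content of Conjecture~\ref{Knn_conjecture}, whose accompanying footnote concedes that the choice of $\mathfrak{P}$ is not understood even heuristically. Your plan is therefore consistent with the paper's, and your structural lemma --- that in any coprime labeling of $K_{n,n}$ all even labels must lie in a single partite set --- is a genuine addition the paper never states; it is correct and does organize the verification nicely. Two of your deductive shortcuts, however, are not proofs. First, ``$1$ may be placed on any vertex'' does not by itself yield a \emph{minimal} labeling with $1$ and $2$ in separate parts: if $1$ sits in the even part of some minimal labeling, swapping it with an odd label $x$ from the other part requires $x$ to be coprime to every remaining label in that part, which is not automatic. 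Second, ``an optimal labeling has no incentive to skip a small prime'' is a heuristic; minimality of the range $\{1,\dots,\pr(K_{n,n})\}$ does not force any particular label to be used, and the claim that every prime up to $\pr(K_{n,n})$ appears genuinely requires inspection of the computed labelings. Since you ultimately defer both points to the finite enumeration, your account lands in the same place as the paper's, but those two steps should be labeled as observations about the data rather than consequences of the structure.
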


\begin{definition}\label{popop}
Let $\mathfrak{P} = \{p_1, \ldots, p_j\}$ be a set of primes.  From $\mathfrak{P}$, we build the set of the first $n$ integers (larger than 1) of the form $p_1^{k_1} p_2^{k_2} \cdots p_j^{k_j}$ such that $k_i\geq 0$ for all $i$.  Since this set is constructed by taking \textit{products of powers of primes}, we denote this set of $n$ elements as $\popop(\mathfrak{P},n)$.  
\end{definition}

For example, for $\mathfrak{P}=\{2,3\}$,
$$\popop(\mathfrak{P},9) = \{ 2, 3, 4, 6, 8, 9, 12, 16, 18 \}.$$

\begin{conjecture}\label{Knn_conjecture}
For $K_{n,n}$, there exists a set of prime numbers $\{p_1, \ldots, p_j\}$ such that this set determines the values in the label sets of the two partite sets of vertices, giving $K_{n,n}$ a minimal coprime labeling.
\end{conjecture}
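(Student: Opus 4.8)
The plan is to prove the (fully canonical) form of the statement by an exchange argument that begins from \emph{any} minimal coprime labeling and rearranges it into the asserted $\popop$-form without ever increasing the largest label used. Since $\pr(K_{n,n})$ is finite for every $n$ — one may always label with the first $2n$ primes — a minimal coprime labeling exists, and the entire argument is carried out relative to one fixed such labeling.

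First I would record the structural constraint forced by complete bipartiteness. Writing the two partite classes as $A$ and $B$, every vertex of $A$ is adjacent to every vertex of $B$, so each label in $A$ is coprime to each label in $B$. Equivalently, if $\mathfrak{P}_A$ and $\mathfrak{P}_B$ denote the sets of primes dividing the labels of $A$ and of $B$, then $\mathfrak{P}_A \cap \mathfrak{P}_B = \emptyset$: a common prime would divide a label on each side and break coprimality across the corresponding edge. I would also observe that the label $1$ is always used in a minimal labeling — if it were omitted, then the largest label equals $\pr(K_{n,n})$, and replacing that label by $1$ keeps all cross-edges coprime while strictly lowering the maximum, contradicting minimality. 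After possibly swapping the two classes I may assume $1 \in A$, so $B$ consists of $n$ integers exceeding $1$ whose prime factors all lie in $\mathfrak{P}_B$.

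The heart of the proof is the exchange step. I would set $\mathfrak{P} := \mathfrak{P}_B$, replace $B$ by $B' := \popop(\mathfrak{P}, n)$, and replace $A$ by $A'$, the $n$ smallest positive integers coprime to every prime of $\mathfrak{P}$ (this list begins with $1$). Because $B$ is itself a set of $n$ integers with prime factors in $\mathfrak{P}$, comparing the two increasing lists term by term gives $\max B' \le \max B$; likewise every element of $A$ is coprime to $\mathfrak{P}$, so $\max A' \le \max A$. The new classes are disjoint — an element of $A'$ has no prime in $\mathfrak{P}$, while $B'$ omits $1$ and uses only primes of $\mathfrak{P}$ — and cross-coprime, since their prime supports are disjoint. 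Hence $(A',B')$ is a genuine coprime labeling with largest label $\max(\max A', \max B') \le \max(\max A, \max B) = \pr(K_{n,n})$. Minimality forces equality, so $(A',B')$ is again minimal, and by construction one class equals $\popop(\mathfrak{P}, n)$ while the other is the complement determined by $\mathfrak{P}$, exactly the asserted form.

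The main obstacle — the step I would be most careful about — is not a hard estimate but the verification that the exchange is simultaneously admissible on both sides: one must confirm that shrinking $B$ to its smallest representatives does not force $A'$ to grow, and conversely, and that no collision arises between the two new label sets. The disjointness $\mathfrak{P}_A \cap \mathfrak{P}_B = \emptyset$ is precisely what decouples the two optimizations and legitimizes the term-by-term comparisons; once it is in hand, both $\max A' \le \max A$ and $\max B' \le \max B$ are immediate. A secondary subtlety worth flagging is that $\popop(\mathfrak{P}, n)$ may in principle use only a proper subset of the primes of $\mathfrak{P}$ (should some prime of $\mathfrak{P}$ be too large to appear among the $n$ smallest smooth numbers); this does not affect the existence claim, as $(A',B')$ remains valid and minimal, but it explains why the complementary class is best described by exclusion — coprimality to $\mathfrak{P}$ — rather than as a second $\popop$ set. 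I expect the genuinely harder questions lurking behind the \emph{descriptive} remarks accompanying the conjecture, namely producing an \emph{explicit} or provably \emph{small} $\mathfrak{P}$ as a function of $n$, to require separate number-theoretic input beyond this existence argument.
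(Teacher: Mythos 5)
The paper offers no proof of this statement --- it is posed as a conjecture, supported only by computations up to $n=13$ and a heuristic (``carefully chosen'' primes) that the authors explicitly say they cannot make precise --- so there is nothing to compare your argument against except that heuristic. Your exchange argument appears to be a correct proof of the existence claim, and its key move is exactly what the paper is missing: rather than trying to choose $\mathfrak{P}$ a priori, you harvest it as the prime support $\mathfrak{P}_B$ of the partite class not containing the label $1$ in an arbitrary minimal coprime labeling, and then observe that replacing $B$ by $\popop(\mathfrak{P}_B,n)$ and $A$ by the $n$ smallest integers coprime to $\mathfrak{P}_B$ can only decrease each sorted label (the term-by-term comparison is valid because $B$ is an $n$-subset of the $\mathfrak{P}_B$-smooth integers exceeding $1$, and $A$ is an $n$-subset of the integers coprime to $\mathfrak{P}_B$, the latter inclusion being forced by complete bipartiteness). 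All the supporting steps check out: $1$ must appear in a minimal labeling, the two new classes are disjoint and cross-coprime, and minimality of $\pr(K_{n,n})$ upgrades ``$\le$'' to ``$=$''. One small point worth adding: your complement class $A'$ is defined by coprimality to the primes of $\mathfrak{P}$, whereas the paper's elaboration defines it by coprimality to the \emph{elements} of $\popop(\mathfrak{P},n)$; since the latter candidate pool contains the former, the paper's version has maximum no larger than yours and is likewise a valid disjoint coprime pair, so your argument establishes the paper's exact formulation with one extra sentence. What your proof deliberately does not (and need not) supply is an a priori rule for selecting $\mathfrak{P}$ --- the existence statement is all the conjecture asserts, but the authors' footnoted difficulty about \emph{which} primes to choose remains open, since your $\mathfrak{P}$ is extracted from a minimal labeling one must already possess.
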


We believe the conjecture is true if we do the following.  Consider a \textit{carefully}\footnote{Currently, how to \textit{carefully} choose a prime set is not clear.  For example, $\popop(\{2,3\},n)$ allows a coprime labeling of $K_{n,n}$ for $n=3,8,13$, but fails for every other $n \leq 13$.  Whereas, $\popop(\{2,7,11,13\},n)$ allows a coprime labeling of $K_{n,n}$ for $n=3,4,10,11,12$, but fails for every other $n \leq13$.} chosen set of small primes, $\mathfrak{P}=\{p_1, \ldots, p_j\}$.   Let $A=\popop(\mathfrak{P},n)$ and $B$  contain the $n$ smallest positive integers which are relatively prime to all the elements of $A$.  We claim that the sets $A$ and $B$, when used to label the partite sets, yield a coprime labeling of $K_{n,n}$.  Moreover, we claim $\pr(K_{n,n}) = \max\{x \; | \; x \in A \cup B \}$, and hence this is a minimal coprime labeling.

\begin{example}
Via exhaustive search in \textsl{Mathematica}, we verified that there is a unique minimal coprime labeling of $K_{12,12}$ and  $\pr(K_{12,12}) = 32$.  If Conjecture~\ref{Knn_conjecture} is correct, then this calculation could have been done by the following method.  If we let $\mathfrak{P} = \{ 2, 7, 11, 13 \}$, then $\popop(\mathfrak{P},12)$ is the following:
$$A = \{ 2, 4, 7, 8, 11, 13, 14, 16, 22, 26, 28, 32 \}.$$
Hence, the set of the $n$ smallest positive integers all relatively prime to the elements of $A$ is:
$$B = \{ 1, 3, 5, 9, 15, 17, 19, 23, 25, 27, 29, 31 \}.$$
These two sets are exactly the sets in which the exhaustive computer check unveiled as the unique minimal coprime labeling of $K_{12,12}$.  Observe that the largest element in $A \cup B$ is 32, and indeed $\pr(K_{12,12}) = 32$.
\end{example}

Again using \textsl{Mathematica}, we observe that there exists a unique minimal coprime labeling of $K_{n,n}$ for $n = 1,2,5,9,11,12$.  On the other hand, for $n = 3,4,6,7,8,10,$ there are a variety of different minimal coprime labelings.  For example, $K_{8,8}$ has 5 different coprime labelings while $K_{10,10}$ has 9 different coprime labelings. Thus it is natural to ask if there is a way to determine the values of $n$ for which $K_{n,n}$ has a unique coprime labeling.  By further computation, Alois Heinz found the number of minimal coprime labelings of $K_{n,n}$ for $n\leq 23$, published as sequence \seqnum{A213806} in the OEIS~\cite{Sloane}.

\subsection{Prime labelings of \texorpdfstring{$K_{m,n}$}{Km,n}}

Although there exist no prime labelings for $K_{n,n}$ when $n>2$, there are prime labelings for $K_{m,n}$ when $m$ is fixed and $n$ is sufficiently large, depending on the value of $m$.  In 1990, Fu and Huang \cite{Fu-Huang94} proposed a necessary and sufficient condition for the graph $K_{m,n}$ to be prime.  Letting $P(t,v)$ be the set of all primes $x$ such that $t < x \leq v$, they prove the following proposition.

\begin{proposition} Let $m,n$ be positive integers where $m<n$.  Then
$K_{m,n}$ is prime if and only if  $m \leq \left|P\left(\frac{m+n}{2},m+n\right)\right|+1$.
\end{proposition}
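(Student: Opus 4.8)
The plan is to recast a prime labeling of $K_{m,n}$ as a set partition and then reduce the existence question to a counting problem about prime factorizations. Write $N = m+n$. A prime labeling assigns the labels $\{1,2,\dots,N\}$ to the vertices, and since $K_{m,n}$ is complete bipartite, it is equivalent to a partition $\{1,\dots,N\} = A \sqcup B$ with $|A| = m$ and $|B| = n$ such that $\gcd(a,b) = 1$ for every $a \in A$ and $b \in B$. The key observation is that this coprimality is governed entirely by primes: for each prime $p \le N$, all multiples of $p$ lying in $\{1,\dots,N\}$ must be placed on the same side of the partition, since any two such multiples would be adjacent yet share the factor $p$. Conversely, if no prime has multiples on both sides, then every cross pair is automatically coprime. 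So the problem becomes: can $\{1,\dots,N\}$ be $2$-colored with color classes of sizes $m$ and $n$ so that, for each prime $p$, all multiples of $p$ receive the same color?

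Next I would analyze the forced-together structure by grouping $\{1,\dots,N\}$ under the equivalence relation generated by $x \sim y$ whenever $x$ and $y$ share a common prime factor; any admissible coloring is constant on each equivalence class. I claim the classes are exactly: the singleton $\{1\}$; a singleton $\{p\}$ for each prime $p$ with $\tfrac{N}{2} < p \le N$ (its only multiple in range is itself, so it shares a factor with nothing else); and one single large class consisting of every remaining integer in $\{2,\dots,N\}$. The crucial point is that this last class really is connected: every composite $k \le N$ has a prime factor at most $\sqrt{N} \le \tfrac{N}{2}$ (for $N \ge 4$), and every integer $k \ge 2$ with a prime factor $p \le \tfrac N2$ is joined to the even number $2p \le N$, while all even numbers are mutually joined through the factor $2$. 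Writing $L = \left|P\!\left(\tfrac{m+n}{2}, m+n\right)\right|$ for the number of large primes, we therefore obtain one block of size $N - 1 - L$ together with $L+1$ free singletons, namely $1$ and the $L$ large primes.

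Finally I would finish with a counting argument. The big block must be assigned wholly to $A$ or wholly to $B$, while the $L+1$ singletons may be distributed freely. Placing the block in the larger side $B$, the small side $A$ must be filled entirely from the $L+1$ singletons, which is possible exactly when $m \le L+1$; the residual count for $B$ then matches automatically, since $(N-1-L) + (L+1-m) = n$. The only alternative, putting the block in $A$, forces $N-1-L \le m$, i.e.\ $L \ge n-1$, and since $n > m$ this already implies $m \le L+1$, so it enlarges nothing. Hence an admissible coloring exists if and only if $m \le L + 1 = \left|P\!\left(\tfrac{m+n}{2},m+n\right)\right| + 1$, which is precisely the stated criterion; reversing the reduction, any such coloring yields a genuine prime labeling.

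The main obstacle I expect is the structural claim in the second paragraph: pinning down exactly which integers are forced together and, in particular, proving that all non-isolated numbers lie in a \emph{single} component rather than several. This rests on the clean dichotomy between having smallest prime factor at most $\tfrac N2$ and being a lone large prime, which requires the bound $\sqrt N \le \tfrac N2$ and hence a little care for very small $N$, as well as correct bookkeeping of the roles of $1$ and of the boundary prime near $\tfrac N2$. Once the component sizes are correct, the final equivalence is just arithmetic; Bertrand's postulate ensures $L \ge 1$ but is not needed for the proof itself.
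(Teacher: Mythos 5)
Your argument is correct. One important point of comparison: the paper does not actually prove this proposition --- it is quoted from Fu and Huang \cite{Fu-Huang94} without proof --- so there is no in-paper argument to match against. What the paper does prove on its own (Theorem~\ref{main_bipartite_result} and the propositions that follow) is only a sufficiency-type statement, via exactly the construction your final paragraph recovers: place $1$ together with $m-1$ primes from $\left(\frac{m+n}{2},\,m+n\right]$ in the small part and everything else in the large part. Your added value is the necessity direction, which you get from a clean structural fact: under the relation ``shares a prime factor,'' the set $\{1,\dots,m+n\}$ decomposes into the singleton $\{1\}$, a singleton for each prime in $\left(\frac{m+n}{2},\,m+n\right]$, and a single connected block containing everything else (every composite, and every prime at most $\frac{m+n}{2}$, links to $2$ through an even multiple $2p$). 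Since that block must lie wholly in one part, and when $m<n$ placing it in the small part is subsumed by the other case as you check, the small part must be filled from the $L+1$ singletons, giving $m\le L+1$ as both necessary and sufficient. The edge cases you flag are genuinely harmless: $\sqrt{N}\le \frac{N}{2}$ fails only for $N=3$, where there are no composites and the block is empty, and a prime equal to $\frac{N}{2}$ correctly lands in the block rather than among the counted primes because $P(t,v)$ uses the strict inequality $t<x$. This is a complete, self-contained, and elementary proof of the equivalence, stronger than anything the paper itself establishes for $K_{m,n}$.
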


We provide alternate proofs for specific cases of small values of $m$ and then in full generality. First, we must introduce some helpful notation and definitions.  Combining the notation of \cite{Fu-Huang94} with ours, we denote the set of labels for each partite set of vertices in a prime labeling of $K_{m,n}$ as $A_{m,n}$ and $B_{m,n}$.  For $K_{3,4}$, for example,

\begin{center}
\begin{tikzpicture}[baseline=2.5ex,xscale=1]
\foreach \x in {0.5,1.5,2.5}
\foreach \w in {0,1,2,3}
		\draw (\x,1) -- (\w,0);
\foreach \x in {0,1,2,3}
		\shade[ball color=blue] (\x,0) circle (.5ex);
\foreach \x in {0.5,1.5,2.5}
		\shade[ball color=blue] (\x,1) circle (.5ex);
\node [above] at (0.5,1) {1};
\node [above] at (1.5,1) {5};
\node [above] at (2.5,1) {7};
\node [below] at (0,0) {2};
\node [below] at (1,0) {3};
\node [below] at (2,0) {4};
\node [below] at (3,0) {6};
\end{tikzpicture}
\hspace{.1in} corresponds to \hspace{.1in}
$\begin{cases}
A_{3,4} = \{1,5,7\}, \mbox{ and}\\
B_{3,4} = \{2,3,4,6\}.
\end{cases}$
\end{center}

If we let $\pi(x)$ denote the number of primes less than or equal to $x$, then the {\it $n^{\text{th}}$ Ramanujan prime} is the least integer $R_n$ for which $\pi(x) - \pi(\frac{x}{2}) \geq n$ holds for all $x \geq R_n$.  The first few Ramanujan primes as given in sequence \seqnum{A104272} in the OEIS~\cite{Sloane} are as follows:

\begin{center}
	\begin{tabular}{|c||ccccc|}
	\hline
	$n$ & 1 & 2 & 3 & 4 & 5 \\ \hline
	$R_n$ & 2 & 11 & 17 & 29 & 41 \\
	\hline
	\end{tabular}
\end{center}

\begin{remark}
In his attempt to give a new proof of Bertrand's postulate in 1919, Ramanujan published a proof in which he not only proves the famous postulate, but also generalizes it to infinitely many cases. This answered the question, ``From which $x$ onward will there be at least $n$ primes lying between $\frac{x}{2}$ and $x$?''~\cite{ramanujan1919}.
\end{remark}

\begin{theorem}\label{main_bipartite_result}
$K_{m,n}$ is prime if $n \geq R_{m-1}-m$.
\end{theorem}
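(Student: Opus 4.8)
The plan is to show that the hypothesis $n \geq R_{m-1}-m$ is just a repackaging of the Fu--Huang criterion stated in the proposition above, so that the theorem follows immediately from that criterion together with the defining property of the Ramanujan prime. Throughout, I would write $N = m+n$ for the order of the graph.

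First I would rewrite the assumption. The inequality $n \geq R_{m-1}-m$ is equivalent to $N = m+n \geq R_{m-1}$. By definition, $R_{m-1}$ is the least integer such that $\pi(x) - \pi(\frac{x}{2}) \geq m-1$ holds for all $x \geq R_{m-1}$. Applying this with $x = N$, which is legitimate precisely because $N \geq R_{m-1}$, yields
\[
\pi(N) - \pi\!\left(\tfrac{N}{2}\right) \geq m-1.
\]

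Next I would identify the left-hand side with the quantity appearing in the proposition. The difference $\pi(N) - \pi(\frac{N}{2})$ counts exactly the primes $x$ satisfying $\frac{N}{2} < x \leq N$, which is by definition the cardinality $\left|P\!\left(\frac{N}{2}, N\right)\right| = \left|P\!\left(\frac{m+n}{2}, m+n\right)\right|$. Combining this with the displayed inequality gives
\[
m \leq \left|P\!\left(\tfrac{m+n}{2}, m+n\right)\right| + 1,
\]
which is precisely the sufficient condition in the Fu--Huang proposition. Since we are in the regime $m < n$ of this subsection, that proposition applies and lets us conclude that $K_{m,n}$ is prime.

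The argument is almost entirely a translation, so I do not expect a deep obstacle; the only point requiring genuine care is the bookkeeping when $N$ is odd, so that $\frac{N}{2}$ is not an integer. I would verify that the strict inequality $\frac{N}{2} < x$ in the definition of $P$ matches the floor convention implicit in $\pi(\frac{N}{2})$: a prime $x$ exceeds the non-integer $\frac{N}{2}$ if and only if it exceeds $\lfloor \frac{N}{2}\rfloor$, and when $N$ is even the prime $\frac{N}{2}$ (if it is prime) is correctly excluded from both counts. With this off-by-one check settled, the identification of $\pi(N) - \pi(\frac{N}{2})$ with $\left|P\!\left(\frac{N}{2}, N\right)\right|$ is exact, and the theorem follows.
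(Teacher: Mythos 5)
Your argument is correct, and its first half is exactly the paper's: both you and the authors use the defining property of $R_{m-1}$, applied at $x=m+n\geq R_{m-1}$, to conclude that the interval $\left(\frac{m+n}{2},\,m+n\right]$ contains at least $m-1$ primes (your off-by-one check at non-integer $\frac{N}{2}$ is the right thing to worry about and is handled correctly). Where you diverge is the final step: you hand the conclusion off to the quoted Fu--Huang proposition, whereas the paper finishes with an explicit construction, taking $A_{m,n}=\{1,p_1,\ldots,p_{m-1}\}$ for $m-1$ of those primes and $B_{m,n}=\{1,\ldots,m+n\}\setminus A_{m,n}$. That construction is essentially the proof of the sufficiency direction of Fu--Huang: since each $p_i>\frac{m+n}{2}$, no multiple of $p_i$ other than $p_i$ itself lies in $\{1,\ldots,m+n\}$, so every label in $B_{m,n}$ is coprime to every label in $A_{m,n}$. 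Your route is shorter but leans on the cited proposition as a black box, which slightly undercuts the authors' stated aim of giving alternate, self-contained proofs; it also quietly inherits the hypothesis $m<n$ from that proposition, a hypothesis the theorem as stated does not impose and which the paper's direct construction does not need. If you add the one-line construction above, your proof becomes self-contained and matches the paper's.
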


\begin{proof}
There are at least $m-1$ primes in the interval $(\frac{m+n}{2}, m+n]$ for $n\geq R_{m-1}-m$. If we denote the first $m-1$ of these primes by $p_1, p_2, \ldots , p_{m-1}$, then the sets
\begin{align*}
A_{m,n} &= \{1, p_1, p_2, \ldots, p_{m-1}\}\\
B_{m,n} &= \{1, \ldots, m+n\} \backslash A_{m,n}
\end{align*}
give a prime labeling of $K_{m,n}$ for $n \geq R_{m-1}-m$.
\end{proof}

\begin{remark}
Note that Theorem \ref{main_bipartite_result} provides a sufficiently large value of $n$ for which $K_{m,n}$ is always prime. It is interesting to note, however, that for certain smaller values of $n$, the graph $K_{m,n}$ still has a prime labeling.  These cases are summarized below for $3 \leq m \leq 13$.
\end{remark}

\begin{center}
	\begin{tabular}{|c||l|l|}
	\hline
	$K_{m,n}$  & Prime (small $n$-cases)   & Prime by Theorem \ref{main_bipartite_result}\\ \hline
	$K_{3,n}$  & $n = $ 4, 5, 6 																										& $n \geq $ 8 \\ \hline
  $K_{4,n}$  & $n = $ 9       																										& $n \geq $ 13 \\ \hline
  $K_{5,n}$  & $n = $ 14, 15, 16, 18, 19, 20 																			& $n \geq $ 24 \\ \hline
  $K_{6,n}$  & $n = $ 25, 26, 27, 31	 																						& $n \geq $ 35 \\ \hline
  $K_{7,n}$  & $n = $ 36, 37, 38 																									& $n \geq $ 40 \\ \hline
  $K_{8,n}$  & $n = $ 45, 46, 47, 48, 49 																					& $n \geq $ 51 \\ \hline
  $K_{9,n}$  & $n = $ 52 																													& $n \geq $ 58 \\ \hline
  $K_{10,n}$ &  																																	& $n \geq $ 61 \\ \hline
  $K_{11,n}$ & $n = $ 62, 68, 69, 70, 72, 73, 74,							& $n \geq $ 86 \\
  & 78, 79, 80, 81, 82 & \\
  \hline
  $K_{12,n}$ &																																		& $n \geq $ 89 \\ \hline
  $K_{13,n}$ & $n = $ 90, 91, 92																									& $n \geq $ 94 \\
	\hline
	\end{tabular}
\end{center}

\bigskip

We conclude this subsection by applying Theorem~\ref{main_bipartite_result} to find the exact prime labelings for all graphs $K_{m,n}$ when $m = $ 3, 4, 5, or 6. Moreover, we give prime labelings for $K_{m,n}$ for the values of $n$ smaller than $R_{m-1}-m$ for which a prime labeling is possible.

\begin{proposition}
$K_{3,n}$ is prime if $n =4, 5, 6$ or $n \geq 8$.
\end{proposition}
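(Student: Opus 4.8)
The plan is to split the claim into two ranges: the large values $n \geq 8$, which come for free from Theorem~\ref{main_bipartite_result}, and the three exceptional small values $n = 4, 5, 6$, which I will handle directly by reusing the construction in the proof of that theorem. For the large range, note that $m = 3$ gives $R_{m-1} = R_2 = 11$, so $R_{m-1} - m = 11 - 3 = 8$; thus Theorem~\ref{main_bipartite_result} immediately yields that $K_{3,n}$ is prime whenever $n \geq 8$. Nothing further is needed there.

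For $n = 4, 5, 6$ the point is that the Ramanujan-prime hypothesis in Theorem~\ref{main_bipartite_result} is only a uniform device for guaranteeing the existence of $m-1 = 2$ primes in the interval $\left(\frac{m+n}{2}, m+n\right] = \left(\frac{n+3}{2}, n+3\right]$ all at once; the underlying labeling needs nothing more than those two primes. So I would simply verify by hand that this interval contains two primes for each small $n$: for $n = 4, 5, 6$ the interval is $(3.5, 7]$, $(4, 8]$, and $(4.5, 9]$ respectively, and in every case both $5$ and $7$ lie inside. Taking $A_{3,n} = \{1, 5, 7\}$ and $B_{3,n} = \{1, 2, \ldots, n+3\} \setminus A_{3,n}$, every edge of $K_{3,n}$ joins a label in $A_{3,n}$ to one in $B_{3,n}$, and coprimality is immediate: the label $1$ is coprime to everything, while each of $5$ and $7$ exceeds $\frac{n+3}{2}$ and so is more than half of the largest available label, whence its only multiple in $\{1, \ldots, n+3\}$ is itself — and that multiple sits in $A_{3,n}$, not $B_{3,n}$. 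This is exactly the reasoning behind Theorem~\ref{main_bipartite_result}, reused for small $n$; for $n = 4$ it recovers the labeling $A_{3,4} = \{1,5,7\}$, $B_{3,4} = \{2,3,4,6\}$ displayed earlier.

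I do not expect a genuine obstacle here; the only subtlety is recognizing that the Ramanujan bound is slack for small arguments, so the values $4, 5, 6$ must be checked individually rather than deduced from the theorem. For completeness — to confirm that $\{4,5,6\} \cup \{\, n : n \geq 8 \,\}$ really exhausts the prime cases with $n > 3$ — I would record that $n = 7$ is the sole gap: the interval $(5, 10]$ contains only the prime $7$, so the Fu--Huang criterion $m \leq \left|P\!\left(\frac{m+n}{2}, m+n\right)\right| + 1$ reads $3 \leq 1 + 1$, which fails, and hence $K_{3,7}$ admits no prime labeling.
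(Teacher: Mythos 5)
Your proof is correct and follows essentially the same route as the paper: $n\geq 8$ is dispatched by Theorem~\ref{main_bipartite_result} with $R_2=11$, and the cases $n=4,5,6$ use the identical label sets $A_{3,n}=\{1,5,7\}$, $B_{3,n}=\{1,\ldots,n+3\}\setminus A_{3,n}$. The only difference is cosmetic: where the paper simply exhibits the three small labelings, you justify them uniformly by noting that $5$ and $7$ still lie in $\left(\frac{n+3}{2},\,n+3\right]$ so each has no proper multiple among the labels, which is a slightly cleaner packaging of the same idea (your closing remark about $n=7$ is accurate but not needed for this one-directional statement).
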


\begin{proof}
Since $R_2=11$, there are at least two primes $p_1, p_2$ in the interval $(\frac{n+3}{2}, n+3]$ for $n \geq 8$. Hence, the sets $A_{3,n} = \{1, p_1, p_2\} \mbox{ and } B_{3,n} = \{1, \ldots, n+3\} \backslash A_{3,n}$ give a prime labeling of $K_{3,n}$ for $n \geq 8$.

If $n = $ 4, 5, or 6, then $A_{3,n} = \{1, 5, 7\}$ and the sets $B_{3,n}= \{2, 3, 4, 6\}, \{2, 3, 4, 6, 8\}$, and $\{2, 3, 4, 6, 8, 9\}$, respectively, give the following prime labelings of $K_{3,4}$, $K_{3,5}$, and $K_{3,6}$:
\begin{center}
\begin{tikzpicture}[xscale=1]
\foreach \x in {0.5,1.5,2.5}
\foreach \w in {0,1,2,3}
		\draw (\x,1) -- (\w,0);
\foreach \x in {0,1,2,3}
		\shade[ball color=blue] (\x,0) circle (.5ex);
\foreach \x in {0.5,1.5,2.5}
		\shade[ball color=blue] (\x,1) circle (.5ex);
\node [above] at (0.5,1) {1};
\node [above] at (1.5,1) {5};
\node [above] at (2.5,1) {7};
\node [below] at (0,0) {2};
\node [below] at (1,0) {3};
\node [below] at (2,0) {4};
\node [below] at (3,0) {6};
\end{tikzpicture}
\hspace{.35in}
\begin{tikzpicture}[xscale=1]
\foreach \x in {1,2,3}
\foreach \w in {0,1,2,3,4}
		\draw (\x,1) -- (\w,0);
\foreach \x in {0,1,2,3,4}
		\shade[ball color=blue] (\x,0) circle (.5ex);
\foreach \x in {1,2,3}
		\shade[ball color=blue] (\x,1) circle (.5ex);
\node [above] at (1,1) {1};
\node [above] at (2,1) {5};
\node [above] at (3,1) {7};
\node [below] at (0,0) {2};
\node [below] at (1,0) {3};
\node [below] at (2,0) {4};
\node [below] at (3,0) {6};
\node [below] at (4,0) {8};
\end{tikzpicture}
\hspace{.35in}
\begin{tikzpicture}[xscale=1]
\foreach \x in {1.5,2.5,3.5}
\foreach \w in {0,1,2,3,4,5}
		\draw (\x,1) -- (\w,0);
\foreach \x in {0,1,2,3,4,5}
		\shade[ball color=blue] (\x,0) circle (.5ex);
\foreach \x in {1.5,2.5,3.5}
		\shade[ball color=blue] (\x,1) circle (.5ex);
\node [above] at (1.5,1) {1};
\node [above] at (2.5,1) {5};
\node [above] at (3.5,1) {7};
\node [below] at (0,0) {2};
\node [below] at (1,0) {3};
\node [below] at (2,0) {4};
\node [below] at (3,0) {6};
\node [below] at (4,0) {8};
\node [below] at (5,0) {9};
\end{tikzpicture}
\end{center}
\end{proof}

\begin{proposition}
$K_{4,n}$ is prime if $n=9$ or $n \geq 13$.
\end{proposition}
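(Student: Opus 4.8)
The plan is to split the statement into the large-$n$ regime, which is handled immediately by Theorem~\ref{main_bipartite_result}, and the single exceptional value $n=9$, which I would treat by an explicit construction mirroring the proof of that theorem.

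First I would dispose of the case $n \geq 13$. Since the third Ramanujan prime is $R_3 = 17$ and here $m = 4$, we have $R_{m-1} - m = R_3 - 4 = 13$. Hence Theorem~\ref{main_bipartite_result} applies directly and guarantees that $K_{4,n}$ is prime whenever $n \geq 13$, with no additional work required.

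The case $n = 9$ lies strictly below the threshold $13$, so Theorem~\ref{main_bipartite_result} does not apply. The key observation is that the Ramanujan bound provides a \emph{uniform} guarantee valid for all $x \geq R_{m-1}$, yet for this particular value of $m+n$ the relevant interval already carries enough primes. Concretely, for $m+n = 13$ the interval $\left(\frac{m+n}{2}, m+n\right] = \left(\frac{13}{2}, 13\right]$ contains exactly the three primes $7, 11, 13$, and $m - 1 = 3$. Following the construction in the proof of Theorem~\ref{main_bipartite_result}, I would set
\[
A_{4,9} = \{1, 7, 11, 13\}, \qquad B_{4,9} = \{1, \ldots, 13\} \backslash A_{4,9} = \{2,3,4,5,6,8,9,10,12\},
\]
and use these to label the two partite sets of $K_{4,9}$.

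It then remains to verify that every label in $A_{4,9}$ is relatively prime to every label in $B_{4,9}$, which is routine: the label $1$ is coprime to everything, and since $2 \cdot 7 = 14 > 13$, none of the three primes $7, 11, 13$ divides any element of $B_{4,9}$ (each such element being at most $13$ and distinct from these primes). Thus every edge constraint is satisfied and the labeling is prime. The only genuine content here, and the step I would flag as the crux, is recognizing that $n = 9$ is admissible at all: this is a prime-counting check that the interval $\left(\frac{13}{2}, 13\right]$ happens to contain $m-1$ primes even though $9 < R_3 - 4$, precisely the kind of below-threshold coincidence that Theorem~\ref{main_bipartite_result} cannot detect.
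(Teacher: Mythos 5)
Your proposal is correct and follows essentially the same route as the paper: the case $n \geq 13$ is exactly the paper's application of $R_3 = 17$ (hence three primes in $\left(\frac{n+4}{2}, n+4\right]$), and for $n = 9$ you produce the identical sets $A_{4,9} = \{1,7,11,13\}$ and $B_{4,9} = \{2,3,4,5,6,8,9,10,12\}$. Your added coprimality check via $2 \cdot 7 = 14 > 13$ is a nice explicit touch the paper leaves implicit, but the argument is the same.
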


\begin{proof}
Since $R_3 =17$, there are at least three primes $p_1, p_2, p_3$ in the interval $(\frac{n+4}{2}, n+4]$ for $n \geq 13$. Hence, the sets $A_{4,n} = \{1, p_1, p_2, p_3\} \mbox{ and } B_{4,n} = \{1, \ldots, n+4\} \backslash A_{4,n}$ give a prime labeling of $K_{4,n}$ for $n \geq 13$.

If $n=9$, then the sets $A_{4,9} = \{1, 7, 11, 13\}$ and $B_{4,9} = \{2, 3, 4, 5, 6, 8, 9, 10, 12\}$ give a prime labeling of $K_{4,9}$.
\end{proof}

\begin{proposition}
$K_{5,n}$ is prime if $n=14, 15, 16, 18, 19, 20$ or $n \geq 24$.
\end{proposition}

\begin{proof}
Since $R_4 =29$, there are at least four primes $p_1, p_2, p_3, p_4$ in the interval $(\frac{n+5}{2}, n+5]$ for $n \geq 24$.  Hence, the sets $A_{5,n} = \{1, p_1, p_2, p_3, p_4\} \mbox{ and } B_{5,n} = \{1, \ldots, n+5\} \backslash A_{5,n}$ give a prime labeling of $K_{5,n}$ for $n \geq 24$.

If $n = $ 14, 15, or 16, then choose $A_{5,n} = \{1, 11, 13, 17, 19\}$.  If $n = $ 18, 19, or 20, then choose $A_{5,n} = \{1, 13, 17, 19, 23\}$.  In each case, $B_{5,n}= \{1, \ldots, n+5\} \backslash A_{5,n}$ gives a prime labeling of $K_{5,n}$.
\end{proof}

\begin{proposition}
$K_{6, n}$ is prime if $n=25, 26, 27, 31$ or $n \geq 35$.
\end{proposition}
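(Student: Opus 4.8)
The plan is to split the argument into the large-$n$ regime, which is handled directly by Theorem~\ref{main_bipartite_result}, and four explicitly constructed small cases, exactly mirroring the proofs of the preceding propositions for $K_{3,n}$ through $K_{5,n}$.

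First, for the large-$n$ regime I would apply Theorem~\ref{main_bipartite_result} with $m=6$. Since the fifth Ramanujan prime is $R_5 = 41$, the hypothesis $n \geq R_{m-1}-m$ becomes $n \geq 41-6 = 35$. Thus for every $n \geq 35$ there are at least five primes $p_1, \ldots, p_5$ in the interval $(\frac{n+6}{2}, n+6]$, and the sets $A_{6,n} = \{1, p_1, \ldots, p_5\}$ and $B_{6,n} = \{1, \ldots, n+6\} \backslash A_{6,n}$ give a prime labeling.

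For the four small cases $n = 25, 26, 27, 31$, I would exhibit the five primes explicitly. The governing principle is the one underlying Theorem~\ref{main_bipartite_result}: any prime $p$ lying in the upper half $(\frac{n+6}{2}, n+6]$ satisfies $2p > n+6$, so $p$ is the only multiple of itself in $\{1, \ldots, n+6\}$; placing such a prime in $A_{6,n}$ therefore makes it coprime to every element of the complementary set $B_{6,n}$. Counting primes in the relevant intervals shows that for $n = 25, 26, 27$ one may take $A_{6,n} = \{1, 17, 19, 23, 29, 31\}$, while for $n = 31$ one may take $A_{6,n} = \{1, 19, 23, 29, 31, 37\}$; in each case $B_{6,n} = \{1, \ldots, n+6\} \backslash A_{6,n}$ completes the labeling.

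The verification in every case is routine: since $1$ is coprime to all labels and each chosen prime divides no element of $B_{6,n}$, every edge of $K_{6,n}$ joins a pair of coprime labels. The only point requiring a small amount of care is confirming that exactly five primes lie in $(\frac{n+6}{2}, n+6]$ for each listed value of $n$, and that only four lie there for the intermediate values $n = 28, 29, 30, 32, 33, 34$ (which is why those values are absent from the statement). This is a finite check of prime counts rather than a genuine obstacle, so I do not expect any serious difficulty in completing the proof.
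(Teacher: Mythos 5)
Your proposal is correct and follows essentially the same route as the paper: invoke Theorem~\ref{main_bipartite_result} with $R_5=41$ for $n\geq 35$, and use the explicit sets $A_{6,n}=\{1,17,19,23,29,31\}$ for $n=25,26,27$ and $A_{6,31}=\{1,19,23,29,31,37\}$ for $n=31$, which are exactly the sets the paper chooses. Your added justification that each prime $p$ in $\left(\frac{n+6}{2},\,n+6\right]$ has no other multiple in $\{1,\ldots,n+6\}$ is the (unstated) reason the paper's construction works, so the argument is complete.
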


\begin{proof}
Since $R_5 =41$, there are at least five primes $p_1, p_2, p_3, p_4, p_5$ in the interval $(\frac{n+6}{2}, n+6]$ for $n \geq 35$.  Hence, the sets $A_{6,n} = \{1, p_1, p_2, p_3, p_4, p_5\} \mbox{ and } B_{6,n} = \{1, \ldots, n+6\} \backslash A_{6,n}$ give a prime labeling of $K_{6,n}$ for $n \geq 35$.

If $n = $ 25, 26, or 27, then choose $A_{6,n} = \{1, 17, 19, 23, 29, 31\}$. If $n=31$, then choose $A_{6,31} = \{1, 19, 23, 29, 31, 37\}$.   In each case, $B_{6,n}=\{1, \ldots, n+6 \} \backslash A_{6,n}$ gives a prime labeling of $K_{6,n}$.
\end{proof}

\begin{question}
Is there any predictability as to the values of $n$ smaller than $R_{m-1}-m$ for which $K_{m,n}$ has a prime labeling?  It is interesting to note that when $m=10$ or $m=12$, there are no such values of $n$.
\end{question}

\section{Future work}\label{sec:future_work}

Throughout the paper we have mentioned some questions and conjectures for further research. We conclude here with a few additional open questions.

\begin{question} Is it possible to write every even integer $2n$ in the form $2n=q-p$ where $q$ is prime and $p$ is either 1 or a prime less than $2n+1$? \end{question}

An affirmative answer to this interesting number theory question implies that all ladders are prime and, in fact, have a consecutive cyclic prime labeling. 

\begin{question}
Does there exist an inductive method to get a minimal coprime labeling of the graph $K_{n+1,n+1}$ from a minimal coprime labeling of the graph $K_{n,n}$?
\end{question}

We have observed that minimal coprime labelings of $K_{n,n}$ are sometimes properly contained as subgraphs of a minimal coprime labeling of $K_{n+1,n+1}$.  For example, we have the following labeling of $K_{6,6}$:
\begin{center}
\begin{tikzpicture}[xscale=1]
\foreach \x in {0,1,2,3,4,5}
\foreach \w in {0,1,2,3,4,5}
		\draw (\x,1) -- (\w,0);
\foreach \x in {0,1,2,3,4,5}
		\shade[ball color=blue] (\x,0) circle (.5ex);
\foreach \x in {0,1,2,3,4,5}
		\shade[ball color=blue] (\x,1) circle (.5ex);
\node [above] at (0,1) {1};
\node [above] at (1,1) {3};
\node [above] at (2,1) {5};
\node [above] at (3,1) {9};
\node [above] at (4,1) {11};
\node [above] at (5,1) {15};
\node [below] at (0,0) {2};
\node [below] at (1,0) {4};
\node [below] at (2,0) {7};
\node [below] at (3,0) {8};
\node [below] at (4,0) {13};
\node [below] at (5,0) {14};
\end{tikzpicture}
\end{center}
Observe that the above graph is properly contained in the following minimal coprime labeling of $K_{7,7}$:
\begin{center}
\begin{tikzpicture}[xscale=1]
\foreach \x in {0,1,2,3,4,5,6}
\foreach \w in {0,1,2,3,4,5,6}
		\draw (\x,1) -- (\w,0);
\foreach \x in {0,1,2,3,4,5,6}
		\shade[ball color=blue] (\x,0) circle (.5ex);
\foreach \x in {0,1,2,3,4,5,6}
		\shade[ball color=blue] (\x,1) circle (.5ex);
\node [above] at (0,1) {1};
\node [above] at (1,1) {3};
\node [above] at (2,1) {5};
\node [above] at (3,1) {9};
\node [above] at (4,1) {11};
\node [above] at (5,1) {15};
\node [above] at (6,1) {17};
\node [below] at (0,0) {2};
\node [below] at (1,0) {4};
\node [below] at (2,0) {7};
\node [below] at (3,0) {8};
\node [below] at (4,0) {13};
\node [below] at (5,0) {14};
\node [below] at (6,0) {16};
\end{tikzpicture}
\end{center}
Also we see that the sets $\popop(\{2,7,13\},6)$ and $\popop(\{2,7,13\},7)$ give the bottom row labelings for $K_{6,6}$ and $K_{7,7}$, respectively.  We observed that in all the examples which we looked at for which we go from a minimal coprime labeling of $K_{n,n}$ to $K_{n+1,n+1}$ in which the above phenomena above arose, the prime numbers in the $\popop$ sets  coincided.

Secondly, another reason to believe that there may exist an inductive method to get from $K_{n,n}$ to $K_{n+1,n+1}$ is the observation that we can always switch a prime number in the top row with one from the bottom row and still have a coprime labeling if and only if there are no multiples of the particular two prime numbers within the set of the other labels in the graph.  For example, we can get from $K_{5,5}$ to $K_{6,6}$ by simply switching the primes 5 and 7 in $K_{5,5}$.  The cost of such a switch is that the label 10 in $K_{5,5}$ cannot also be a label in $K_{6,6}$ since both the values 2 and 5 exist in $K_{6,6}$ as a consequence of the switch.  So we replace 10 with the smallest available prime, namely 13.  Below we illustrate this transition from $K_{5,5}$ (on the left) to $K_{6,6}$ (on the right):
\begin{center}
\begin{tikzpicture}[xscale=.75]
\draw[<->,line width=.5mm, dotted] (2,.1) -- (2,.9);
\foreach \x in {0,1,2,3,4}
		\shade[ball color=blue] (\x,0) circle (.5ex);
\foreach \x in {0,1,2,3,4}
		\shade[ball color=blue] (\x,1) circle (.5ex);
\node [above] at (0,1) {1};
\node [above] at (1,1) {3};
\node [state, above] at (2,1) {\textbf{7}};
\node [above] at (3,1) {9};
\node [above] at (4,1) {11};
\node [below] at (0,0) {2};
\node [below] at (1,0) {4};
\node [state, below] at (2,0) {\textbf{5}};
\node [below] at (3,0) {8};
\node [below] at (4,0) {10};
\end{tikzpicture}
\hspace{.7in}
\begin{tikzpicture}[xscale=.75]
\foreach \x in {0,1,2,3,4,5}
		\shade[ball color=blue] (\x,0) circle (.5ex);
\foreach \x in {0,1,2,3,4,5}
		\shade[ball color=blue] (\x,1) circle (.5ex);
\node [above] at (0,1) {1};
\node [above] at (1,1) {3};
\node [state, above] at (2,1) {\textbf{5}};
\node [above] at (3,1) {9};
\node [above] at (4,1) {11};
\node [above] at (5,1) {15};
\node [below] at (0,0) {2};
\node [below] at (1,0) {4};
\node [state, below] at (2,0) {\textbf{7}};
\node [below] at (3,0) {8};
\node [state, below] at (4,0) {\textbf{13}};
\node [below] at (5,0) {14};
\end{tikzpicture}
\end{center}

\section{Acknowledgements}
The authors thank the AIM (American Institute of Mathematics) and NSF (National Science Foundation) for funding the REUF (Research Experiences for Undergraduate Faculty) program.  Furthermore, we appreciate the hospitality of ICERM (Institute for Computational and Experimental Research in Mathematics), who hosted the REUF program in summer 2012 where the collaborators met and the work on this paper was initiated.

\bigskip
\hrule
\bigskip

\noindent 2010 {\it Mathematics Subject Classification}:
Primary 05C78; Secondary 11A05.

\noindent \emph{Keywords: } Coprime labeling, prime labeling, prime graph, consecutive cyclic prime labeling, bipartite graph, ladder graph, Ramanujan prime.

\bigskip
\hrule
\bigskip

\noindent (Concerned with sequence
\seqnum{A213273},
\seqnum{A213806}, and
\seqnum{A104272}.)

\bigskip
\hrule
\bigskip

\vspace*{+.1in}
\noindent
Received March 11 2016;
revised versions received  June 7 2016.
Published in {\it Journal of Integer Sequences}, June 13 2016.

\bigskip
\hrule
\bigskip

\noindent
Return to
\htmladdnormallink{Journal of Integer Sequences home page}{http://www.cs.uwaterloo.ca/journals/JIS/}.
\vskip .1in

\end{document}